\newtheorem {theorem} {Theorem}[section]
\newtheorem {proposition} [theorem]{Proposition}
\newtheorem {corollary} [theorem]{Corollary}
\theoremstyle{remark}
\theoremstyle{definition}
\newtheorem {remark} [theorem]{Remark}
\begin{document}
\setlength{\parindent}{4ex}
\setlength{\parskip}{2ex}
\setlength{\oddsidemargin}{12mm}
\setlength{\evensidemargin}{9mm}

\title
{\textsc {New lower bound for the number of critical periods for planar polynomial systems}}
\begin{figure}[b]
\rule[-0.5ex]{7cm}{0.2pt}\\
\footnotesize $^{*}$Corresponding author. E-mail address:
cenxiuli2010@163.com (X. Cen). Supported by the NSF of China (Nos. 11801582, 11771101 and 11971495) and the NSF of Guangdong Province (No. 2019A1515011239).
\end{figure}
\author
{{\textsc {Xiuli Cen}}\\[2ex]
{\footnotesize\it  School of Mathematics (Zhuhai), Sun Yat-sen University, Zhuhai, Guangdong 519082, P.R.China}}
\date{}
\maketitle {\narrower \small \noindent {\bf Abstract\,\,\,} {In this paper, we construct two classes of planar  polynomial Hamiltonian systems having a center at the origin, and
obtain the lower bounds for the number of critical periods for these systems. For polynomial potential systems of degree $n$, we provide a lower bound of $n-2$ for the number of critical periods,
and for polynomial systems of degree $n$, we acquire a lower bound of $n^2/2+n-5/2$ when $n$ is odd and $n^2/2-2$ when $n$ is even for the number of critical periods. To the best of our knowledge, these lower bounds are new, moreover the latter one is twice the existing results up to the dominant term. }

Mathematics Subject Classification: 34C25, 34C23, 34C07.}

Keywords: Polynomial system; Period function; Critical period; Perturbation.

\section{Introduction}

The present paper is concerned with the period function of periodic orbits of planar polynomial systems.
It is one of important open problems in the study of the qualitative theory of real planar differential
systems. For a given smooth planar autonomous differential system with a continuum of periodic orbits, through a global transversal smooth section, we can parameterize them by a real number $h$. And the period function,
denoted by $T(h)$, assigns to each orbit its minimal period.

There is a rigorous study on the period function, which covers the topics including isochronicity,
see the bibliography of \cite{C} for some historic references;
monotonicity,
for example the papers \cite{CS,Z} and the references therein;
and bifurcation of critical periods.

If the period function is not monotone, the local maximum or minimum of the period function are called critical periods. It can be proved that the number of critical periods does not depend on the transversal section, or the parametrization. Studies on the number of critical periods for polynomial systems have exhibited rich results. For instance, some authors propose several criteria to bound the number
of critical periods, see the papers \cite{LL,MV,YZ,MRV} and the references therein, and some ones investigate certain specific systems with a center, and give the number
of critical periods they have, see the references \cite{CRZ,CJ,CS,CGS,FLRS,GLY,GY,GZ,G,HBHR,LZ1,LZ2,MD,RHH} and so on.

The present paper aims at providing a new lower bound for the number of critical periods for planar polynomial systems. Two classes of polynomial systems are mainly considered.\\[1ex]
\noindent $\bullet$ {\bf Polynomial potential system.} Denote $n$ the degree of the polynomial potential systems and $N^n$ the maximum number of critical periods of the period annulus around the origin. Some known results include: \\[1ex]
\indent$\bullet$ $N^2=0$, see \cite{CJ};\\[1ex]
\indent$\bullet$ $N^3=1$, see \cite{G};\\[1ex]
\indent$\bullet$ $N^4\geq2, N^5\geq3,N^{2n-1}\geq n-2$, see \cite{CJ}; \\[1ex]
\indent$\bullet$ $N^{n}\geq 2[\frac{n-2}{2}]$, see \cite{CGS} for a method similar as the second order Melnikov function method;\\[1ex]
\indent$\bullet$ $N^{5}\geq 3, N^{7}\geq 5$, see \cite{CGS} for a method similar as the high order Melnikov function method.\\[1ex]
\noindent$\bullet$ {\bf General polynomial system.}  The results on the number of critical periods for polynomial systems indicate a linear growth with the degree of the systems \cite{CJ,CGS,MD},
until Gasull, Liu and Yang \cite{GLY} give an example using reversible isochronous centers with reversible perturbations, which shows that
there exist polynomial vector fields of degree $n$ whose number of critical periods grows at least quadratically with $n$. More precisely, the number of critical periods has the expression $n^2/4+3n/2-4$ when $n$ is even and a similar one when $n$ is odd. As far as we know, this is the best result up to now.

The main results of this paper are as follows.
\begin{theorem}\label{T1}
There exist polynomial potential systems of degree $n$ whose number of critical periods is at least $n-2$.
\end{theorem}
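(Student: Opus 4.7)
My plan is to construct an explicit polynomial potential of degree $n+1$, so that the planar Hamiltonian system $\dot x=y$, $\dot y=-V'(x)$ has degree $n$, and exhibit $n-2$ critical periods using a Chicone--Jacobs-style bifurcation analysis. The setup is
$$V(x)=\tfrac12 x^2+\sum_{k=3}^{n+1}a_k x^k,$$
which gives a nondegenerate center at the origin and $n-1$ free parameters $a=(a_3,\dots,a_{n+1})$. Expanding the period function around $h=0$ produces
$$T(h)=2\pi+\sum_{m\ge 1}\tau_m(a)\,h^m,$$
where the coefficients $\tau_m$ are explicit polynomials in $a$; a classical derivation via the substitution $V(x)=u^2/2$, inversion of $u(x)$ to $x(u)$, and termwise integration against $(2h-u^2)^{-1/2}$ shows that $\tau_m$ depends on $a_3,\dots,a_{2m+2}$ with a nonzero linear contribution from the leading variable $a_{2m+2}$, yielding a triangular Jacobian block for $m\le\lfloor(n-1)/2\rfloor$.

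The bifurcation argument is the Chicone--Jacobs criterion: if one can find $a^*$ with $\tau_1(a^*)=\cdots=\tau_{n-2}(a^*)=0$, $\tau_{n-1}(a^*)\ne 0$, and the Jacobian of $(\tau_1,\dots,\tau_{n-1})$ at $a^*$ having full rank $n-1$, then successive sign-controlled perturbations of $a^*$ yield $n-2$ simple positive zeros of $T'(h)$ near the origin, i.e.\ $n-2$ critical periods of $T(h)$. The triangular block handles $\tau_m=0$ for $m\le\lfloor(n-1)/2\rfloor$ by eliminating $a_{2m+2}$ in terms of the lower-indexed parameters; the remaining equations $\tau_m=0$ for $\lfloor(n-1)/2\rfloor<m\le n-2$ cut out a one-dimensional algebraic subvariety in the ``odd-indexed'' parameters $a_3,a_5,\dots$, on which one needs to verify that $\tau_{n-1}\neq 0$ and that the full $(n-1)\times(n-1)$ Jacobian remains nondegenerate.

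The main obstacle is precisely this last verification: past the triangular range no new parameter enters $\tau_m$, so algebraic independence of the remaining $\tau_m$'s and the nonvanishing of $\tau_{n-1}$ on the subvariety must be established by explicit polynomial manipulation or by induction on $n$. The cleanest route, in my view, is to specialize the construction to a well-chosen one-parameter family of coefficients (for example, exploiting a reversibility-like symmetry in $V$, or prescribing the $a_k$ as polynomials in a single auxiliary parameter) that reduces the required non-degeneracy to a transparent algebraic identity amenable to induction on $n$; as a fallback, a second-order Melnikov perspective with $V_\varepsilon=\tfrac12x^2+\varepsilon P(x)+\varepsilon^2 Q(x)$ (with $P$ supported on odd monomials so the first-order period function vanishes by parity) provides an alternative polynomial structure of comparable size but with cleaner algebraic features. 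In either route, the algebraic bookkeeping of the period coefficients is the technical heart of the proof.
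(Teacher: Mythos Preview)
Your strategy is fundamentally different from the paper's, and it has a genuine gap at the step you yourself flag. The paper does not perturb from the linear center and analyze the Taylor coefficients $\tau_m(a)$ of the period at $h=0$; instead it works globally. It takes the degenerate potential $g(x)=x\prod_{i=1}^k(x-\beta_i)^2$, whose system has a center at the origin and $k$ cusps at $(\beta_i,0)$, so that the period function blows up at the $k$ singular energy levels $h_1<\cdots<h_k$. By Rolle's theorem this forces at least one local minimum of $T(h)$ on each $(h_i,h_{i+1})$, $i=1,\dots,k-1$. The perturbation $g(x,\varepsilon)=x\prod_{i=1}^k\bigl((x-\beta_i)^2+\varepsilon\bigr)$ then removes all the cusps and makes the origin a global center; for small $\varepsilon$ the $k-1$ local minima persist and, because $T(h_i,\varepsilon)$ remains large while $T(\cdot,\varepsilon)$ is bounded at the intermediate points and at the ends $h\to 0^+$, $h\to+\infty$, one picks up $k$ local maxima as well, for a total of $2k-1=n-2$ critical periods (the even case is handled analogously with a saddle on the outer boundary). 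No Jacobian computation or algebraic independence argument is needed anywhere.

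Your approach, by contrast, is exactly the local-bifurcation route of Chicone--Jacobs and of Cima--Gasull--Silva \cite{CGS}, and the step you call ``the technical heart'' is precisely the obstruction that has prevented that method from reaching $n-2$ in general. As the introduction records, the second-order Melnikov version of this argument yields only $N^n\ge 2\lfloor(n-2)/2\rfloor$ (hence $n-3$ for odd $n$), and the full higher-order analysis has been completed only for $n=5,7$. Once $m>\lfloor(n-1)/2\rfloor$ no new parameter enters $\tau_m$, and you give no argument---only a hope for ``a transparent algebraic identity amenable to induction on $n$''---that the remaining $\tau_m$ are independent on the cut-out subvariety, or that $\tau_{n-1}$ does not vanish there. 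Your fallback to a second-order Melnikov scheme with odd-supported first perturbation is likewise the \cite{CGS} construction and gives at most $2\lfloor(n-2)/2\rfloor$, not $n-2$. Absent the missing nondegeneracy verification, the proposal is a plan rather than a proof; the paper's global construction shows that this detour through period-coefficient algebra is unnecessary for the result.
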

\begin{theorem}\label{T2}
There exist polynomial systems of degree $n$ whose number of critical periods is at least $n^2/2+n-5/2$ when $n$ is odd and $n^2/2-2$ when $n$ is even.
\end{theorem}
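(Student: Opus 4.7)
The plan is to exhibit an explicit polynomial Hamiltonian system of degree $n$ with a center at the origin, perturb it by a general polynomial Hamiltonian of degree $n$, and carry out a first-order Melnikov-type analysis of the period function to count the resulting critical periods. Since Theorem \ref{T2} refers to general polynomial systems (rather than the more restrictive potential ones of Theorem \ref{T1}), the additional monomial directions available here should account for the jump from a linear-in-$n$ bound to a quadratic-in-$n$ bound.

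Concretely, I would first choose an unperturbed Hamiltonian $H_0$ whose center at the origin is isochronous and whose closed orbits admit a tractable parametrization, so that the relevant period integrals can be computed. Writing $H_\varepsilon = H_0 + \varepsilon H_1$ with a general polynomial perturbation $H_1 = \sum a_{ij} x^i y^j$ of degree $\le n$, the standard expansion of the period along closed orbits produces
\begin{equation*}
T(h,\varepsilon) = T_0(h) + \varepsilon T_1(h) + O(\varepsilon^2), \qquad T_1(h) = \sum_{i+j \le n} a_{ij}\, I_{ij}(h),
\end{equation*}
where each $I_{ij}(h)$ is an explicit integral of $x^i y^j$ against a one-form along $\{H_0 = h\}$. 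The next step is to identify, among the $\binom{n+2}{2}$ monomials, a maximal subfamily whose $I_{ij}$ are linearly independent as functions of $h$, the remaining ones being either exact (hence null) or linear combinations of the chosen ones. The target counts $n^2/2+n-5/2$ (odd $n$) and $n^2/2-2$ (even $n$) will appear as the dimension of this subfamily, minus a small correction for monomials contributing only a constant term to $T_1$.

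Once the linear span of the $I_{ij}$ is shown to have the claimed dimension and to form a Chebyshev (or extended Chebyshev) system on the relevant range of $h$, standard arguments produce choices of $a_{ij}$ for which $T_1(h)$ has exactly that many simple zeros in the period annulus. Each simple zero $h^{*}$ is a nondegenerate critical point of $T_1$, so by the implicit function theorem applied to $\partial_h T(\,\cdot\,,\varepsilon)$, $h^{*}$ perturbs to a critical period of $T(\,\cdot\,,\varepsilon)$ for every sufficiently small $\varepsilon \neq 0$.

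The main obstacle I expect is establishing the Chebyshev property for a subfamily of size exactly $n^2/2 + O(n)$: the existing work \cite{CGS,GLY} only manages $n^2/4 + O(n)$, essentially because restricting to reversible perturbations of a reversible isochrone halves the effective number of monomial directions. Doubling the count requires working with perturbations that break the reversibility of $H_0$ and then proving, likely via a Wronskian computation or an inductive argument on the exponents $(i,j)$, that the new monomials contribute genuinely independent directions to $T_1(h)$ rather than collapsing onto those already contributed by the reversible ones. Choosing $H_0$ simple enough that the $I_{ij}$ are explicitly computable yet rich enough that no unexpected algebraic relations arise among them is the key technical point on which the doubling of the bound hinges.
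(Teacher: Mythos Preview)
Your proposal is not the paper's argument, and it has a genuine gap at exactly the point you yourself flag as the ``main obstacle.'' You are outlining a Melnikov-type bifurcation from an isochronous center, which is the strategy of \cite{CGS,GLY}; that strategy has so far produced only $n^2/4+O(n)$ critical periods, and your plan to double this rests entirely on the unproved hope that dropping reversibility yields twice as many independent Abelian-type integrals $I_{ij}(h)$ forming a Chebyshev system. Nothing in the proposal explains why the non-reversible monomials should contribute genuinely new directions rather than collapsing onto the span already generated, nor how the Wronskian or inductive argument would go. Without that, the proposal is a restatement of the open problem rather than a proof.

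The paper's route is entirely different and avoids Melnikov analysis altogether. For $n=2k+1$ it takes the separable Hamiltonian
\[
H(x,y)=\int_0^y s\prod_{i=1}^k(s-\alpha_i)^2\,\mathrm{d}s+\int_0^x s\prod_{i=1}^k(s-\beta_i)^2\,\mathrm{d}s,
\]
whose vector field has a center at the origin and $(k+1)^2-1$ additional singularities (cusps on the axes, degenerate points off them). With the $\alpha_i,\beta_i$ chosen so that the critical values $h_s=H(\beta_i,\alpha_j)$ are pairwise distinct, one gets $(k+1)^2-1$ levels at which the period $T(h)$ blows up to $+\infty$, forcing at least one local minimum of $T$ on each intermediate interval. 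Replacing each factor $(x-\beta_i)^2$ by $(x-\beta_i)^2+\varepsilon$ (and similarly in $y$) destroys all the extra singularities, so the origin becomes a global center with a smooth period function $T(h,\varepsilon)$ on $(0,\infty)$. For small $\varepsilon$ the values $T(h_s,\varepsilon)$ remain large while the values at the former minima and at $h=0^+,\,h=+\infty$ remain bounded; an elementary max/min count then yields at least $(k+1)^2-1$ local maxima and $(k+1)^2-2$ local minima, i.e.\ $2k^2+4k-1=n^2/2+n-5/2$ critical periods. The even case is handled by the same idea with one factor replaced by a linear term producing a saddle on the outer boundary. The doubling over \cite{GLY} thus comes not from finding more independent Melnikov integrals but from manufacturing $\sim n^2/4$ singular levels of a two-dimensional grid of degenerate points, each of which contributes two critical periods after perturbation.
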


Comparing with the existing results,  Theorem \ref{T1} shows that $N^{2k+1}\geq 2k-1$, and this result not only generalizes the results in \cite{CGS} by using the method similar as high order
Melnikov function method. At the same time, this result can be also viewed as an improvement of the result $N^{2k+1}\geq 2[\frac{2k+1-2}{2}]=2k-2$ in \cite{CGS}. Theorems \ref{T2} tells us that there exist polynomial systems of degree $n$ whose number of critical periods grows at least quadratically with $n$, and
more accurately, the number of critical periods has the expression $n^2/2+n-5/2$ when $n$ is odd and $n^2/2-2$ when $n$ is even, which is twice the result in \cite{GLY} up to the dominant term.

The idea used in this paper is totally different from the methods in other papers. To show our idea, we suppose $n=2k+1$.  We will construct a Hamiltonian system $\dot x=H_y, \dot y=-H_x$, which satisfies
that (i) $(0, 0)$ is a center and $H(0, 0)=0$; (ii) there exists $m$ positive numbers $h_1<h_2<\cdots<h_m$ so that if $h\not=h_i$ for any $i$, then $\{(x, y)|H(x,y)=h\}$ is a closed orbit and
if $h=h_{i}$ for some $i$, then $\{(x, y)|H(x,y)=h\}$ is a homoclinic orbit and a singularity (usually a cusp). Thus we can  define the period function $T(h)$, which is well defined on
each  interval $(0, h_1)$, $(h_1, h_2), \cdots, (h_m, +\infty)$ and $T(h_i)=+\infty, 1\leq i\leq m$. The figure of $T(h)$ (we let $m=4$)  can be found in Figure \ref{Fig1.sub.1}. Obviously
 there exists at least one local minimum of $T(h)$ between two critical hamiltonian values $h_i$ and $h_{i+1}$.

\begin{figure}[ht]
\centering
\subfigure[]{
\label{Fig1.sub.1}
\includegraphics[width=0.35\textwidth]{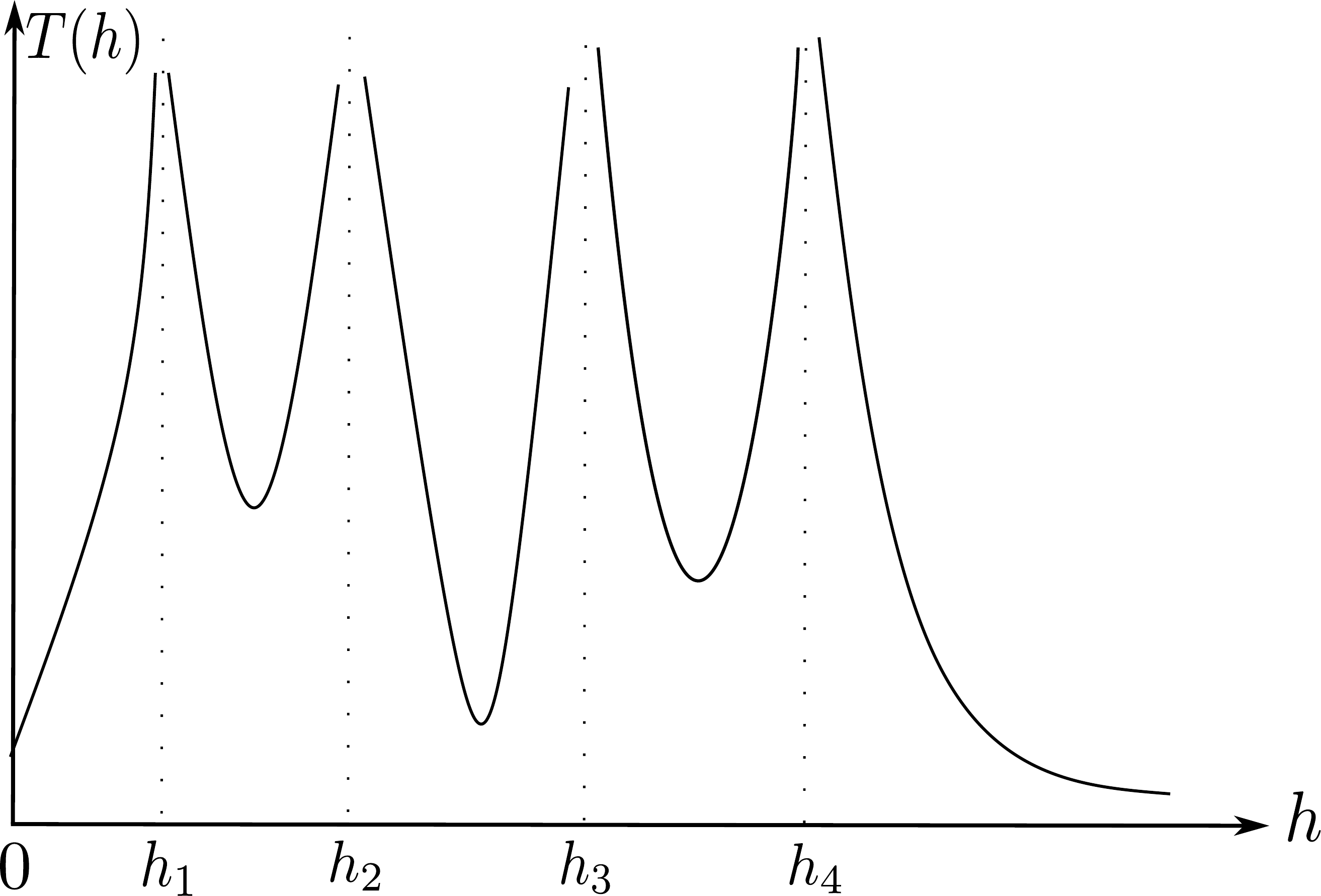}}
\subfigure[]{
\label{Fig1.sub.2}
\includegraphics[width=0.35\textwidth]{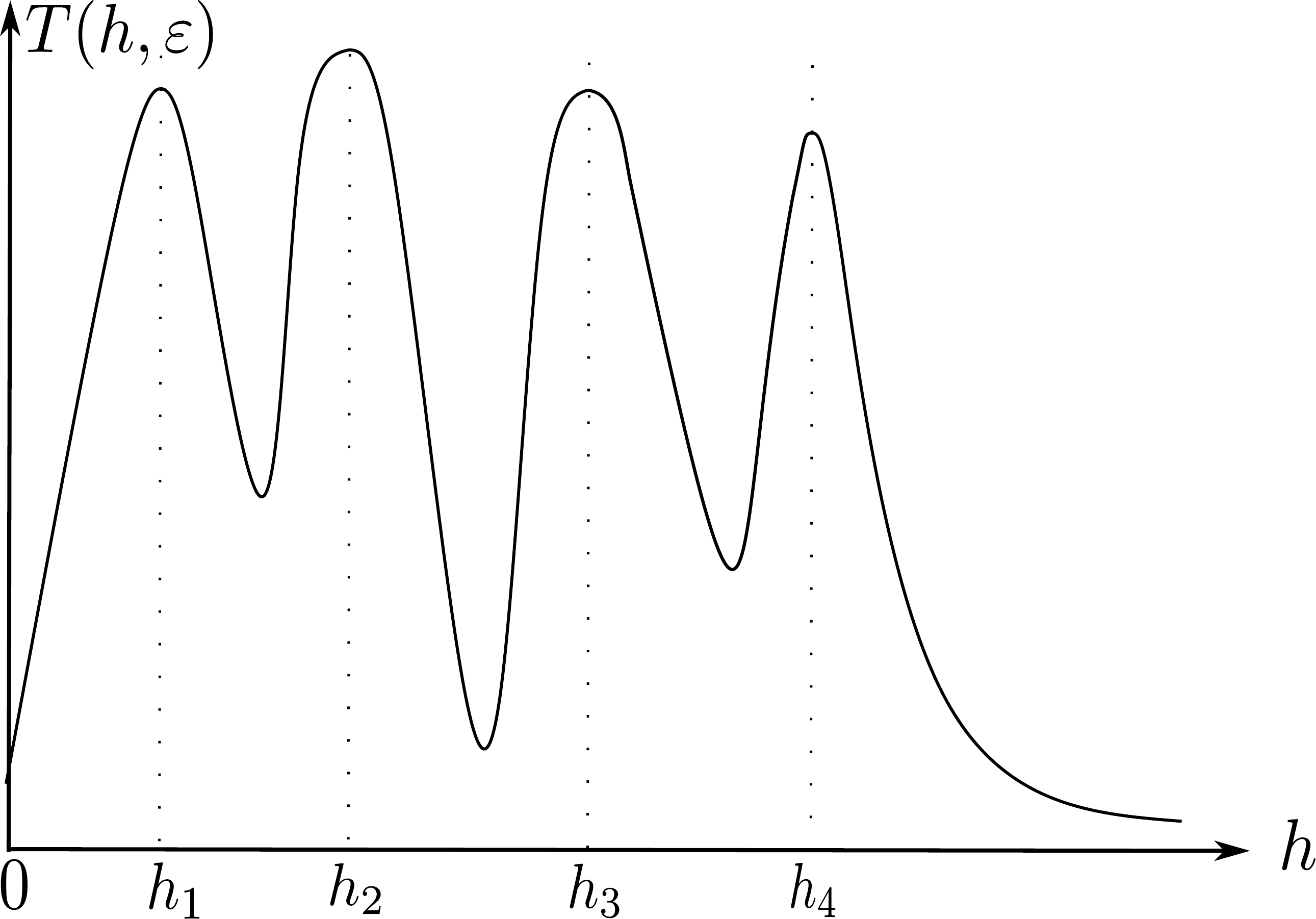}}
\caption{The graphs of the period functions $T(h)$ and $T(h,\varepsilon)$.}
\label{Fig1}
\end{figure}

Now we add some suitable perturbation so that the new system is still a Hamiltonian system but all the  cusps disappear. Then for the new system, the center is a global center so that the new period function  $T(h,\varepsilon)$ is well defined on $(0, +\infty)$. When
$\varepsilon$ is sufficiently small, the local  minimum of $T(h)$ between $h_i$ and $h_{i+1}$ will remain and there exists a  local maximum in the neighborhood of each $h_i$. See Figure \ref{Fig1.sub.2}. So $T(h,\varepsilon)$
has at least $2m-1$ critical points. Hence the problem is converted to
constructing the unperturbed Hamiltonian system such that $m$ is as big as possible, and finding suitable perturbations.

Notice that the systems we construct are Hamiltonian systems, and all the critical periods obtained occur in one nest, i.e., all the critical periodic orbits surround a unique singular point. For $n=2$, it is
well known that for Hamiltonian systems, the period functions are monotone, but for reversible systems, the period functions  can have two critical points. Thus it is natural to ask:

\noindent{\bf Question 1.}  For planar reversible polynomial systems of degree $n$, can we find a better lower bound for the number of critical periods?

There is a similar problem, which is to ask the upper bound $H(n)$ of the number of limit cycles of planar polynomial vector fields of degree $n$. Up to now, the best lower bound of $H(n)$ is
$H(n)=\frac{n^2}{2}+O(n)$ when the limit cycles are contained in one nest, see for example \cite{GGG,I,LLYZ}. But for several nests, the lower bound of  $H(n)$ becomes
$O(n^2\ln n)$, see \cite{LCC}. Notice that in our paper, the lower bound for the number of critical periods of the systems of degree $n$ is also $\frac{n^2}{2}+O(n)$. It is interesting to know (see also \cite{GLY})

\noindent{\bf Question 2.}  Are there  polynomial systems of degree $n$ whose number of critical periods occurring in several nests is at least $O(n^2\ln n)$?

The paper is organized as follows. In Section \ref{sec:PP}, the properties of the period function for unperturbed polynomial potential systems are investigated, and then a new lower bound for the number of critical periods for perturbed polynomial potential systems  is obtained. In Section \ref{sec:P}, the properties of the period function for unperturbed polynomial systems are studied, and then a new lower bound for the number of critical periods for perturbed polynomial systems is derived. Moreover, some concrete examples are given at the end.

\section{Proof of Theorem \ref{T1}}\label{sec:PP}

This section devotes to providing a lower bound for the number of critical periods for polynomial potential systems of degree $n$.
We mainly study the case $n=2k+1$, and the case $n=2k$ will be discussed briefly.

Consider the polynomial potential system
\begin{equation}\begin{split}\label{L1}
&\dot{x}=y,\\
&\dot{y}=-g(x)=-x\prod_{i=1}^k(x-\beta_i)^2,
\end{split}\end{equation}
where $\beta_i$'s are real constants, satisfying $0<\beta_1<\beta_2<\cdots<\beta_k<+\infty$.

Obviously, system \eqref{L1} is a Hamiltonian system and has a first integral
\begin{equation}\label{H0}
H(x,y)=\frac{y^2}{2}+G(x), \quad \mbox{where}\ G(x)=\int_{0}^x g(s)\ \mathrm{d}s.
\end{equation}
It is easy to verify that system \eqref{L1} has an elementary center at $(0,0)$, and $k$ cusps at $(\beta_i,0),\ i=1,2,\cdots,k$. Denote
\begin{equation}\label{hi}
h_i=H(\beta_i,0)=G(\beta_i),\ i=1,2,\cdots,k.
\end{equation}
We have that $0<h_1<h_2<\cdots<h_k<+\infty$ from $\partial H/\partial x=\mathrm{d} G/\mathrm{d} x=g(x)\geq0$ for $x>0$ and $\beta_i<\beta_{i+1}$.
System \eqref{L1} has $k+1$ period annuli around the center $(0,0)$, defined by $H(x,y)=h$ respectively on the intervals $(h_i,h_{i+1}),\ i=0,1,\cdots,k$ with $h_0=H(0,0)=0$ and $h_{k+1}=+\infty$.
Every hamiltonian value $h_i$ corresponds to a cuspidal loop passing through a unique cusp $(\beta_i,0)$. An example of the phase portrait of system \eqref{L1} is shown in Figure \ref{pp}.

\begin{figure}[ht]
\centering
\includegraphics[width=.3\textwidth]{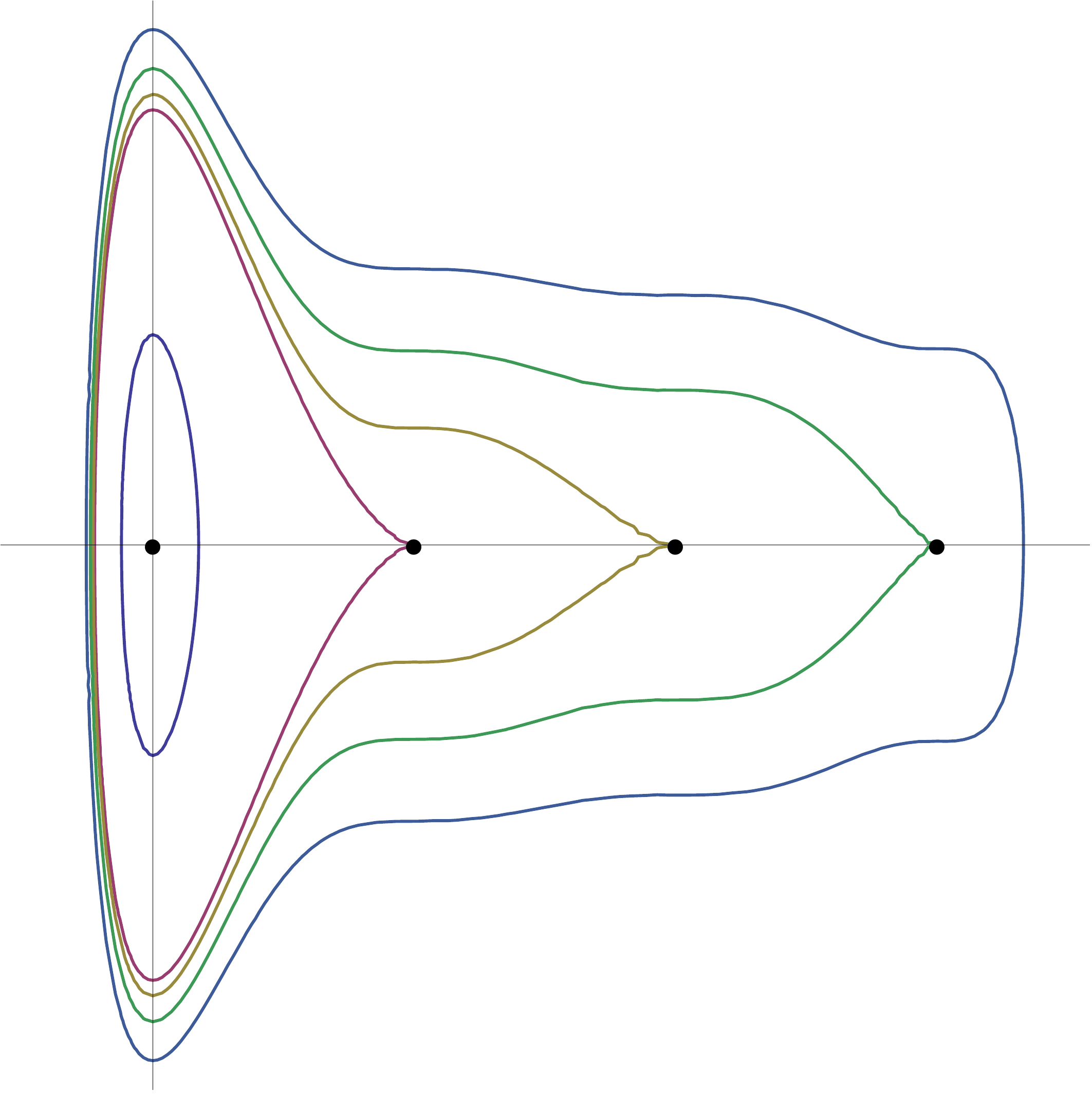}
\caption{\small{The phase portrait of system \eqref{L1} with $k=3$ and $\beta_i=i,\ i=1,2,3$.}}
\label{pp}
\end{figure}

Denote $\Gamma_h$ the periodic orbit defined by $H(x,y)=h,\ h\in\bigcup_{i=0}^k(h_i,h_{i+1})$. Then the period function $T(h)$, which assigns to $\Gamma_h$ its minimal period, can be given by
\begin{equation}\label{Th}
T(h)=\oint_{\Gamma_h}\frac{1}{y}\ \mathrm{d}x, \quad h\in\bigcup_{i=0}^k(h_i,h_{i+1}).
\end{equation}

We will study the properties of $T(h)$. Before this, two important results are introduced.

\begin{theorem}\label{le:YZ1} {\rm (\cite{YZ})}
Let $g(x)$ be a polynomial of degree $\geq$2 with real zeros, and there exists a period
annulus of system \eqref{L1} surrounding only one simple center $(x_0, 0)$, no other equilibrium.
Let $T(h)$ denote the corresponding period function. Then for a suitably chosen energy
parameter $h$, $T(h)$ is strictly monotone increasing ($T'(h)>0$)
on $(0, h_1)$, and $\lim_{h\rightarrow h_1^{-}}T(h)=+\infty$, $\lim_{h\rightarrow 0^{+}}T(h)=2\pi/\sqrt{g'(x_0)}$, where
$h_1<+\infty$.
\end{theorem}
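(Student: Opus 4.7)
My plan is to handle the three assertions of the theorem separately: the limit at the center, the limit at the boundary of the annulus, and the strict monotonicity on the interior. After a translation I may assume $x_0 = 0$ and $G(0) = 0$, and work with the integral representation
\begin{equation*}
T(h) = \sqrt{2}\int_{x_-(h)}^{x_+(h)} \frac{dx}{\sqrt{h - G(x)}},
\end{equation*}
where $x_-(h) < 0 < x_+(h)$ are the two roots of $G(x) = h$ adjacent to the origin.

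The limit as $h \to 0^+$ is the linear-oscillator limit. Since the center is simple, $G(x) = \tfrac12 g'(0)\,x^2 + O(x^3)$ and $x_{\pm}(h) = \pm\sqrt{2h/g'(0)} + O(h)$. Rescaling $x = \sqrt{2h/g'(0)}\,\xi$ and passing to the limit (justified by dominated convergence on the bounded integrand after the rescaling) converts the integral into $(1/\sqrt{g'(0)})\int_{-1}^{1}2\,d\xi/\sqrt{1 - \xi^2} = 2\pi/\sqrt{g'(0)}$.

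The limit as $h \to h_1^-$ reflects that the annulus terminates at an equilibrium. By hypothesis, as $h \to h_1^-$ one (or both) of the endpoints $x_\pm(h)$ approaches a zero $x_*$ of $g$, so $h_1 = G(x_*)$. Near $x_*$ one has $h_1 - G(x) = O((x - x_*)^{m+1})$ with $m \geq 1$ the multiplicity of $x_*$ as a zero of $g$, which produces a non-integrable singularity of $1/\sqrt{h-G(x)}$ as $h \to h_1^-$; an elementary local estimate on the contribution near $x_*$ then yields $T(h) \to +\infty$.

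The main obstacle is the strict monotonicity $T'(h) > 0$ on $(0, h_1)$. My strategy is to pass to an Abel-transform representation: splitting the integral at $x = 0$ and substituting $u = G(x)$ on each side gives
\begin{equation*}
T(h) = \sqrt{2}\int_0^h \frac{\Phi(u)}{\sqrt{h - u}}\,du, \qquad \Phi(u) := \frac{1}{g(G_+^{-1}(u))} - \frac{1}{g(G_-^{-1}(u))},
\end{equation*}
where $G_{\pm}^{-1}$ are the two branches of $G^{-1}$ on the two sides of the center and $\Phi(u) > 0$ on $(0,h_1)$. After the further substitution $u = hs$, $T(h)$ becomes $\sqrt{2h}\int_0^1 \Phi(hs)/\sqrt{1-s}\,ds$, and differentiating in $h$ reduces the positivity of $T'(h)$ to a monotonicity property of $\Phi$. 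The hard part is to establish $\Phi'(u) > 0$ on $(0, h_1)$, and it is here that the hypothesis that all zeros of $g$ are real is used essentially: writing $g(x) = g'(0)\,x\,\prod_j(x - \alpha_j)^{m_j}$ with the product supported on real zeros, I would express $\Phi'(u)$ in terms of the two branches $G_{\pm}^{-1}(u)$ and reduce the desired positivity to an algebraic inequality between those branches. The sign pattern of the factors of $g$ on the annulus, which lies strictly between two consecutive real zeros of $g$, is what forces the inequality; no such control is available when complex roots of $g$ are permitted. This converts the dynamical monotonicity of $T$ into a purely algebraic statement about the potential $G$, and the real-roots assumption is precisely what makes that statement tractable.
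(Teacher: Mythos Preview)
The paper does not prove this theorem: it is quoted verbatim from the cited reference \cite{YZ} (Yang and Zeng) and used as a black box, so there is no in-paper argument to compare your proposal against.

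That said, your treatment of the two boundary limits is standard and fine. The monotonicity argument, however, has a genuine gap. You reduce $T'(h)>0$ to the claim $\Phi'(u)>0$ on $(0,h_1)$, but this claim is false already for the simplest admissible example $g(x)=x(1-x)(1+x)=x-x^{3}$. Here the potential is even, $x_{+}(u)=-x_{-}(u)$, and
\[
\Phi(u)=\frac{2}{g(x_{+}(u))},\qquad
\Phi'(u)=-\frac{2\,g'(x_{+})}{g(x_{+})^{3}} .
\]
For small $u$ one has $x_{+}(u)\approx\sqrt{2u}$, $g'(x_{+})\approx 1>0$, $g(x_{+})>0$, hence $\Phi'(u)<0$ near $u=0$; yet for this soft Duffing potential $T'(h)>0$ throughout the annulus. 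In fact, in the representation $T(h)=\sqrt{2h}\int_{0}^{1}\Phi(hs)(1-s)^{-1/2}\,ds$ one always has $\Phi(u)\sim c\,u^{-1/2}$ near $u=0$, so $\Phi'(u)<0$ there for \emph{every} nondegenerate center; the positive term $T(h)/(2h)$ and the negative contribution from $\Phi'$ cancel at leading order, and the sign of $T'(h)$ is decided by the next-order terms. Thus ``$\Phi'>0$'' cannot be the mechanism, and the real-roots hypothesis must enter through a subtler identity than the one you sketch. Yang and Zeng's argument does use the factorization over real zeros, but via a different decomposition of $T'(h)$ than the one you propose; your plan, as written, would not close.
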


\begin{theorem}\label{le:YZ2} {\rm (\cite{YZ})}
Let $g(x)$ be a polynomial of degree $2n+1$ ($n\geq1$) with real zeros and have a positive
leading coefficient. Then the period function $T(h)$ of the period annulus surrounding all equilibria
of system \eqref{L1} is strictly convex ($T''(h)>0$) and strictly monotone decreasing ($T'(h)<0$) on
$(\bar h_0,+\infty)$, and $\lim_{h\rightarrow +\infty}T(h)=0$, where $\bar h_0$ is finite.
\end{theorem}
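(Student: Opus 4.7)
The plan is to establish the three assertions — $\lim_{h\to+\infty}T(h)=0$, $T'(h)<0$, and $T''(h)>0$ — in that order, using a single scaling argument for the limit and the asymptotic signs, and a Picard–Fuchs analysis for the global signs. Write $g(x)=c_{2n+1}x^{2n+1}+\dots$ with $c_{2n+1}>0$; then $G(x)=\int_0^x g$ has even degree $2n+2$ with positive leading coefficient and $G(x)\to+\infty$ as $|x|\to\infty$. Set $\bar h_0=\max\{G(\gamma):g(\gamma)=0\}$. For $h>\bar h_0$ the equation $G(x)=h$ has exactly two real roots $a(h)<b(h)$, $\{H=h\}$ is a single smooth closed orbit surrounding every equilibrium of \eqref{L1}, and
\begin{equation*}
T(h)=2\int_{a(h)}^{b(h)}\frac{dx}{\sqrt{2(h-G(x))}}.
\end{equation*}

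For the limit I would substitute $x=h^{1/(2n+2)}\xi$. Since $G(h^{1/(2n+2)}\xi)/h\to \frac{c_{2n+1}}{2n+2}\xi^{2n+2}$ uniformly on compact sets as $h\to\infty$, and the turning points $\xi_\pm(h)$ converge to the two roots of $\frac{c_{2n+1}}{2n+2}\xi^{2n+2}=1$, dominated convergence gives
\begin{equation*}
T(h)= C_0\, h^{-n/(2n+2)}\bigl(1+o(1)\bigr),\qquad h\to+\infty,
\end{equation*}
for an explicit $C_0>0$. This yields $\lim_{h\to+\infty}T(h)=0$ and, by differentiating the asymptotic, also $T'<0$ and $T''>0$ for all sufficiently large $h$.

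To establish $T'<0$ and $T''>0$ on the whole ray $(\bar h_0,+\infty)$, naive differentiation under the integral sign fails because $h-G(x)$ has simple zeros at $x=a(h),b(h)$, producing divergent boundary contributions. The clean regularisation is to pass to the action $A(h)=2\int_{a(h)}^{b(h)}\sqrt{2(h-G(x))}\,dx$: boundary terms vanish, $A'(h)=T(h)$, and hence $T'(h)=A''(h)$, $T''(h)=A'''(h)$. The Abelian integrals $J_k(h)=\oint x^k\,dx/y$ on the hyperelliptic family $y^2=2(h-G(x))$ span a finite-dimensional $\mathbb R(h)$-module and satisfy a Picard–Fuchs system with polynomial coefficients in $h$; my plan is to use this system to rewrite $T'(h)$ and $T''(h)$ as linear combinations of the $J_k(h)$ in which the leading monomial $c_{2n+1}x^{2n+1}$ produces a dominant, manifestly signed principal term.

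The main obstacle is propagating the signs of $T'$ and $T''$ uniformly down to $h\to\bar h_0^+$, where the orbit degenerates through a critical point of $G$ realising $\bar h_0$. Asymptotically at infinity the scaling already delivers $T'<0$ and $T''>0$; on the finite part the proof must show that the principal term in the Picard–Fuchs expression never loses to the error terms. I expect the delicate step to be choosing the exact algebraic combination of Abelian integrals so the sign-controlled bound closes on the entire interval $(\bar h_0,+\infty)$, including near the singular level set where the orbit pinches onto that equilibrium.
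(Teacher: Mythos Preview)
The paper does not prove this theorem: it is quoted from Yang--Zeng \cite{YZ} (note the attribution in the theorem header) and invoked as a black box. There is therefore no proof in the present paper to compare your proposal against. The closest the paper comes is in the proof of Proposition~\ref{Ppro}, where it adapts the method of \cite{YZ} to show $\lim_{h\to+\infty}T(h)=0$ for the more general system \eqref{P2}; that argument uses the substitution $h=G(A)$ followed by a Laurent expansion in $1/A$, which is equivalent in spirit to your scaling $x=h^{1/(2n+2)}\xi$.

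On the content of your proposal itself: the scaling step for the limit is correct and standard, and it does yield $T'<0$, $T''>0$ for all sufficiently large $h$. Your plan for the \emph{global} inequalities on all of $(\bar h_0,+\infty)$, however, is not a proof but a hope. The Picard--Fuchs system for a generic degree-$(2n+1)$ polynomial $g$ has rank $2n+1$, and you give no mechanism by which the ``dominant, manifestly signed principal term'' controls the remaining $2n$ terms uniformly as $h\downarrow\bar h_0$, where the orbit pinches onto a critical point and several of the $J_k$ diverge. You yourself flag this as ``the delicate step''; as written it is a genuine gap. If you want a self-contained argument you must either carry out that sign analysis explicitly (and for arbitrary $n$ and arbitrary real-zero configurations of $g$, which is not routine), or consult \cite{YZ} directly for their proof, which exploits the real-zero hypothesis in a more hands-on way than a bare Picard--Fuchs bound.
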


Firstly, a well known fact is presented as follows.
\begin{proposition}\label{Thi}
Let $h_i$ be defined as \eqref{hi}. Then for the period function $T(h)$ given in \eqref{Th}, there hold $\lim_{h\rightarrow h^-_i}T(h)=+\infty$ and $\lim_{h\rightarrow h^+_i}T(h)=+\infty$.
\end{proposition}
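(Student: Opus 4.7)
The plan is to localize the period integral near the cusp $(\beta_i,0)$ and show that the contribution from a small neighborhood of $\beta_i$ alone is unbounded as $h\to h_i$. Solving $H(x,y)=h$ for $y$ on the two branches of $\Gamma_h$, one has the explicit representation
\begin{equation*}
T(h)=2\int_{x_-(h)}^{x_+(h)}\frac{\mathrm{d}x}{\sqrt{2(h-G(x))}},
\end{equation*}
where $x_\pm(h)$ are the turning points defined by $G(x_\pm(h))=h$. Away from the $\beta_j$'s the integrand is bounded uniformly in $h$ near $h_i$, so the problem reduces to controlling the piece of the integral over a fixed small interval $[\beta_i-\delta,\beta_i+\delta]$.

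The key input is the local shape of $G$ at the cusp. Because $\beta_i$ is a double root of $g$, one has $g(\beta_i)=g'(\beta_i)=0$ while $g''(\beta_i)=2\beta_i\prod_{j\neq i}(\beta_i-\beta_j)^2>0$, so Taylor's theorem yields
\begin{equation*}
G(x)-h_i=C_i(x-\beta_i)^3+O\!\left((x-\beta_i)^4\right),\qquad C_i:=\tfrac{1}{6}g''(\beta_i)>0.
\end{equation*}
For $h=h_i-\varepsilon$ with $\varepsilon\to 0^+$ the right turning point lies just inside the cusp at $x_+(h)=\beta_i-(\varepsilon/C_i)^{1/3}+O(\varepsilon^{2/3})$, and the rescaling $\beta_i-x=(\varepsilon/C_i)^{1/3}v$ in the local piece of the integral produces an overall prefactor proportional to $\varepsilon^{-1/6}$ times a $v$-integral that tends to the finite positive constant $\int_1^{+\infty}(v^3-1)^{-1/2}\mathrm{d}v$ as $\varepsilon\to 0^+$; hence the local contribution diverges. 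For $h=h_i+\varepsilon$ the orbit crosses over $x=\beta_i$, and the analogous substitution $\beta_i-x=(\varepsilon/C_i)^{1/3}w$ on $[\beta_i-\delta,\beta_i]$ yields $\varepsilon^{-1/6}$ times the convergent integral $\int_0^{+\infty}(1+w^3)^{-1/2}\mathrm{d}w$, which again diverges.

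The only mild technical point is that the $O((x-\beta_i)^4)$ remainder in the Taylor expansion must not spoil these model integrals. I would handle this by choosing $\delta$ so small that $|G(x)-h_i-C_i(x-\beta_i)^3|\le\tfrac{1}{2}C_i|x-\beta_i|^3$ on $[\beta_i-\delta,\beta_i+\delta]$; the resulting two-sided cubic bounds on $G(x)-h_i$ sandwich the true integrand between constant multiples of the rescaled model integrand, and a dominated-convergence argument then transfers the divergence to the true period integral. Geometrically this encapsulates the familiar fact that trajectories passing close to a nilpotent equilibrium spend arbitrarily long in its neighborhood, so each cuspidal loop $H^{-1}(h_i)$ acts as an infinite-period barrier separating two consecutive annuli.
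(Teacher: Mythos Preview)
Your argument is correct. The paper in fact presents this proposition as a ``well known fact'' and gives no proof in the final text; a suppressed draft version reduces it to showing that $\int^{\beta_i}\mathrm dx/\sqrt{h_i-G(x)}=+\infty$, which holds because $h_i-G(x)$ vanishes to third order at $\beta_i$, and then defers the remaining details to Lemma~5.1 of \cite{YZ}. Your approach is the same localization idea made fully explicit: both rest on the Taylor expansion $G(x)-h_i=C_i(x-\beta_i)^3+O((x-\beta_i)^4)$ at the cusp, which forces the integrand to behave like $|x-\beta_i|^{-3/2}$ along the separatrix. What your version adds beyond the paper is a quantitative rate---the rescaling cleanly isolates an $\varepsilon^{-1/6}$ blow-up of $T(h_i\pm\varepsilon)$ on each side---which is sharper than the bare divergence statement but not needed for the downstream arguments.
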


Secondly, one has an essential result by Theorems \ref{le:YZ1} and \ref{le:YZ2}, which describes the properties of $T(h)$ on the intervals $(0,h_1)$ and $(h_k,+\infty)$.
\begin{proposition}\label{Th0}
Let $h_i$ be defined as \eqref{hi}. Then for the period function $T(h)$ given in \eqref{Th}, one has\\[1ex]
(i)\  $T'(h)>0$ holds on $(0,h_1)$, and $T(0)=\lim_{h\rightarrow 0^+}T(h)=2\pi/(\prod_{i=1}^k\beta_i)$, $\lim_{h\rightarrow h^-_1}T(h)=+\infty$.\\
(ii)\ $T'(h)<0$ holds on $(h_{k},+\infty)$, and $\lim_{h\rightarrow h_{k}^+}T(h)=+\infty$, $\lim_{h\rightarrow +\infty}T(h)=0$.
\end{proposition}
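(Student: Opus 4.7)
The plan is to obtain Proposition \ref{Th0} as an essentially direct consequence of the two cited theorems of Yang--Zhao, applied respectively to the innermost and outermost period annuli of system \eqref{L1}. The only items requiring verification beyond quoting those theorems are: (a) that our innermost annulus truly satisfies the "one simple center, no other equilibrium" hypothesis of Theorem \ref{le:YZ1}; (b) that the abstract boundary value $\bar h_0$ of Theorem \ref{le:YZ2} coincides with our cuspidal value $h_k$; and (c) the explicit computation of the linearized period at $(0,0)$.

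For part (i), I would first observe that $G$ is strictly increasing on $[0,\beta_1]$ with $G(0)=0$ and $G(\beta_1)=h_1$, so for $h\in(0,h_1)$ the orbit $\Gamma_h$ is confined to the strip $\{|x|<\beta_1\}$ and therefore surrounds only the center $(0,0)$, with no cusp $(\beta_i,0)$ in its interior. Theorem \ref{le:YZ1} then applies and yields $T'(h)>0$ on $(0,h_1)$ together with $\lim_{h\to h_1^-}T(h)=+\infty$ and $\lim_{h\to 0^+}T(h)=2\pi/\sqrt{g'(0)}$. A direct differentiation of $g(x)=x\prod_{i=1}^k(x-\beta_i)^2$ gives $g'(0)=\prod_{i=1}^k\beta_i^2$, so $\sqrt{g'(0)}=\prod_{i=1}^k\beta_i$, which produces the stated value $T(0)=2\pi/\prod_{i=1}^k\beta_i$.

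For part (ii), I would note that $g$ has odd degree $2k+1$, only real zeros (at $0$ and each $\beta_i$ with multiplicity two), and positive leading coefficient $1$, so the hypotheses of Theorem \ref{le:YZ2} are satisfied. The period annulus that surrounds all equilibria of \eqref{L1} is precisely the outermost nest $\{H=h:\,h>h_k\}$, because any smaller closed level set lies inside one of the cuspidal loops through $(\beta_i,0)$. Consequently the generic lower bound $\bar h_0$ in Theorem \ref{le:YZ2} must equal $h_k$, yielding $T'(h)<0$ on $(h_k,+\infty)$ and $\lim_{h\to+\infty}T(h)=0$. The remaining limit $\lim_{h\to h_k^+}T(h)=+\infty$ then comes for free from Proposition \ref{Thi} applied with $i=k$.

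The principal obstacle is purely a matter of bookkeeping rather than substance: one must make sure that the two abstract "transition values" $h_1$ in Theorem \ref{le:YZ1} and $\bar h_0$ in Theorem \ref{le:YZ2} are correctly identified with the Hamiltonian values $h_1$ and $h_k$ of our cuspidal loops. This follows from a straightforward inspection of the level-set topology of $H(x,y)=y^2/2+G(x)$: the map $G$ is monotone on each interval $[\beta_{i-1},\beta_i]$, so the bounded components of $\{H=h\}$ change topology exactly at the values $h=h_i$, pinning down the boundaries of every period annulus.
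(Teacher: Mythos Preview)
Your proposal is correct and follows exactly the approach the paper takes: the proposition is stated there as an immediate consequence of Theorems \ref{le:YZ1} and \ref{le:YZ2}, and you have simply spelled out the routine verifications (identifying the inner annulus with $(0,h_1)$, computing $g'(0)=\prod_i\beta_i^2$, identifying $\bar h_0=h_k$, and invoking Proposition \ref{Thi} for the limit at $h_k^+$) that the paper leaves implicit. One cosmetic point: the claim that $\Gamma_h$ lies in $\{|x|<\beta_1\}$ is not literally what you need on the negative side (the left endpoint is some $x_1(h)<0$ determined by $G$, not $-\beta_1$), but since there are no equilibria with $x<0$ this does not affect the argument.
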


Since $T(h)$ is an analytic function on the interval $(h_i,h_{i+1})$,  by Proposition \ref{Thi} and Rolle's theorem, a corollary is derived as follows.
\begin{corollary}\label{coro1}
Let $h_i$ be defined as \eqref{hi}. Then for the period function $T(h)$ given in \eqref{Th}, there exists at least one local minimum point on each interval $(h_i,h_{i+1}),\ i=1,2,\cdots,k-1$.
\end{corollary}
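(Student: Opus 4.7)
The plan is to combine two ingredients already at our disposal: the regularity of $T(h)$ on the open interval $(h_i,h_{i+1})$, which makes it continuous (in fact analytic), and the boundary blow-up from Proposition \ref{Thi}, which gives $\lim_{h\to h_i^+}T(h)=+\infty$ and $\lim_{h\to h_{i+1}^-}T(h)=+\infty$. Together these force $T(h)$ to attain an interior minimum on each $(h_i,h_{i+1})$, $i=1,\dots,k-1$.

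More concretely, I would fix any point $h^{*}\in(h_i,h_{i+1})$ and set $M=T(h^{*})+1$. By Proposition \ref{Thi} there exist $\alpha,\beta$ with $h_i<\alpha<h^{*}<\beta<h_{i+1}$ such that $T(\alpha)>M$ and $T(\beta)>M$. On the compact subinterval $[\alpha,\beta]$ the extreme value theorem applies to the continuous function $T$, producing a point $h^{**}\in[\alpha,\beta]$ with $T(h^{**})=\min_{[\alpha,\beta]}T\leq T(h^{*})<M$. Because the boundary values $T(\alpha)$ and $T(\beta)$ both exceed $M$, the minimizer $h^{**}$ must lie in the open interval $(\alpha,\beta)\subset(h_i,h_{i+1})$, and hence is a local minimum of $T$ on $(h_i,h_{i+1})$.

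The Rolle-theorem formulation promised in the corollary statement amounts to the same observation: since $T\to+\infty$ at both endpoints, for every sufficiently large value $M$ the equation $T(h)=M$ has solutions $\alpha(M)<\beta(M)$ near the two endpoints, and Rolle's theorem applied to $T$ on $[\alpha(M),\beta(M)]$ produces a critical point $c(M)\in(\alpha(M),\beta(M))$ with $T'(c(M))=0$; among such critical points a local minimum must exist because $T$ is bounded below on the interval and tends to $+\infty$ at both ends.

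There is no real obstacle here: once Proposition \ref{Thi} is in hand the argument is an elementary compactness/Rolle exercise. The only thing worth writing carefully is the selection of the subinterval $[\alpha,\beta]$ so that the minimum on $[\alpha,\beta]$ cannot be attained at an endpoint, which is what ensures the critical point is genuinely interior to $(h_i,h_{i+1})$ and thus a local minimum of $T$ itself.
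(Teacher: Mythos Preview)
Your argument is correct and follows exactly the paper's approach: the paper simply remarks that since $T(h)$ is analytic on $(h_i,h_{i+1})$, Proposition~\ref{Thi} together with Rolle's theorem yields the corollary, and you have filled in precisely those details. Your extreme-value-theorem version and the Rolle version are both faithful elaborations of the one-line justification given in the text.
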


As indicated above, we have characterized the properties of the period function $T(h)$ given in \eqref{Th}, whose graph is like as Figure \ref{Fig1.sub.1}. In the following, we will investigate a perturbed system of system \eqref{L1} and prove that there exist polynomial potential systems such that their period functions corresponding to the period annuli have at least $2k-1$ critical points on $(0,+\infty)$.

Consider the perturbed system of system \eqref{L1}
\begin{equation}\begin{split}\label{PL1}
&\dot{x}=y,\\
&\dot{y}=-g(x,\varepsilon)=-x\prod_{i=1}^k((x-\beta_i)^2+\varepsilon),
\end{split}\end{equation}
where $0<\varepsilon\ll1$ is a real parameter.

Obviously, system \eqref{PL1} is also a Hamiltonian system with Hamiltonian function
\begin{equation}\label{H1}
H(x,y,\varepsilon)=\frac{y^2}{2}+G(x,\varepsilon), \quad \mbox{where}\ G(x,\varepsilon)=\int_{0}^x g(s,\varepsilon)\ \mathrm{d}s.
\end{equation}
It is easy to verify that system \eqref{PL1} has a unique equilibrium at $(0,0)$, which is a global center.
Thus, the period annulus around the center of system \eqref{PL1} is defined by the Hamiltonian function $H(x,y,\varepsilon)=h,\ h\in(0,+\infty)$.
The period function $T(h,\varepsilon)$ corresponding to this period annulus is given by
\begin{equation}
T(h,\varepsilon)=\oint_{\Gamma_{h,\varepsilon}}\frac{1}{y}\ \mathrm{d}x,\quad h\in(0,+\infty),
\end{equation}
where $\Gamma_{h,\varepsilon}$ represents the period orbit defined by $H(x,y,\varepsilon)=h$.

Finally, a result on the lower bound for the number of critical periods for polynomial potential systems is obtained as follows.

\begin{theorem}\label{Th1}
For the period annulus of system \eqref{PL1} with $\varepsilon$ sufficiently small, the corresponding period function $T(h,\varepsilon)$ has at least $2k-1$ critical points on $(0,+\infty)$.
\end{theorem}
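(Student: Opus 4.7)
The plan is to exploit the shape of the unperturbed period function $T(h)$ described in Figure \ref{Fig1.sub.1}: by Proposition \ref{Thi} it blows up to $+\infty$ on both sides of every $h_i$, by Corollary \ref{coro1} it has at least one local minimum on each interval $(h_i,h_{i+1})$, and by Proposition \ref{Th0} it is monotone near the endpoints of $(0,+\infty)$. Under the perturbation the cusps disappear (since $g(x,\varepsilon)$ has only the zero $x=0$), the origin becomes a global center, and $T(h,\varepsilon)$ is analytic and finite on all of $(0,+\infty)$. I will produce, for $\varepsilon$ sufficiently small, one local maximum of $T(\cdot,\varepsilon)$ in a small neighborhood of each $h_i$ ($k$ maxima) and one local minimum inside each interval $(h_i,h_{i+1})$ ($k-1$ minima), totaling $2k-1$ distinct critical points.

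The first ingredient is uniform convergence: for every compact set $K\subset(0,+\infty)\setminus\{h_1,\dots,h_k\}$, $T(h,\varepsilon)\to T(h)$ uniformly on $K$ as $\varepsilon\to 0^+$. This is standard once one observes that for $h\in K$ the periodic orbits $\Gamma_{h,\varepsilon}$ and $\Gamma_h$ remain uniformly bounded away from the equilibrium set, so the integrand of $T(h,\varepsilon)=\oint 1/y\,dx$ depends smoothly on $\varepsilon$.

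The delicate step is producing the local maximum near each $h_i$. For $\varepsilon>0$ the point $(\beta_i,0)$ is a regular point of \eqref{PL1}, since $g(\beta_i,\varepsilon)=\beta_i\varepsilon\prod_{j\ne i}((\beta_i-\beta_j)^2+\varepsilon)\neq 0$, and it lies on the regular closed orbit at level $h^{\ast}_i(\varepsilon):=G(\beta_i,\varepsilon)\to h_i$. Writing $p(x,\varepsilon):=\prod_{j\ne i}((x-\beta_j)^2+\varepsilon)$, one obtains the local expansion
\[
G(\beta_i+u,\varepsilon)-G(\beta_i,\varepsilon)=\beta_i\,p(\beta_i,0)\!\left(\tfrac{u^3}{3}+\varepsilon u\right)+O(u^4)+O(\varepsilon u^2),
\]
so the contribution to the period from the arc $x\in[\beta_i-\delta_0,\beta_i]$ is bounded below by a constant multiple of $\int_0^{\delta_0}dw/\sqrt{w(w^2/3+\varepsilon)}$, which diverges like $\varepsilon^{-1/4}$. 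Hence $T(h^{\ast}_i(\varepsilon),\varepsilon)\to+\infty$. Now fix $\delta>0$ small enough that $h_i\pm\delta$ lies in a neighboring unperturbed period annulus and no other $h_j$ lies in $[h_i-\delta,h_i+\delta]$; uniform convergence yields $T(h_i\pm\delta,\varepsilon)\to T(h_i\pm\delta)<+\infty$, while $h^{\ast}_i(\varepsilon)\in(h_i-\delta,h_i+\delta)$ for small $\varepsilon$. Therefore the continuous function $T(\cdot,\varepsilon)$ on $[h_i-\delta,h_i+\delta]$ attains its maximum at an interior point, yielding a local maximum.

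For the minima, invoke Corollary \ref{coro1}: $T(h)$ has a local minimum $h_{m,i}\in(h_i,h_{i+1})$. Shrinking $\delta$ further so that $h_{m,i}\in(h_i+\delta,h_{i+1}-\delta)$ and both $T(h_i+\delta)$ and $T(h_{i+1}-\delta)$ exceed $T(h_{m,i})+1$ (possible by Proposition \ref{Thi}), uniform convergence on $[h_i+\delta,h_{i+1}-\delta]$ implies that for $\varepsilon$ sufficiently small the minimum of $T(\cdot,\varepsilon)$ on this closed interval is strictly smaller than its values at the endpoints, hence attained at an interior point, which is a local minimum. The $k$ maximum-neighborhoods $(h_i-\delta,h_i+\delta)$ and the $k-1$ minimum-intervals $[h_i+\delta,h_{i+1}-\delta]$ are pairwise disjoint, producing $k+(k-1)=2k-1$ distinct critical points of $T(\cdot,\varepsilon)$, as claimed.
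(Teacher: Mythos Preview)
Your proof is correct and follows essentially the same strategy as the paper's: pin down finite values of $T(\cdot,\varepsilon)$ at the unperturbed local minima $\bar h_i\in(h_i,h_{i+1})$ and near the endpoints, show $T(\cdot,\varepsilon)$ is large near each $h_i$, and then apply Rolle/extreme-value arguments to extract $k$ maxima and $k-1$ minima. The only substantive difference is that you justify the ``large near $h_i$'' step by an explicit local expansion of $G(\beta_i+u,\varepsilon)$ yielding the $\varepsilon^{-1/4}$ divergence, whereas the paper simply asserts (from $\lim_{h\to h_i^-}T(h)=+\infty$) that $T(h_i,\varepsilon)>M$ for small $\varepsilon$; your version is more self-contained on this point but otherwise the arguments coincide.
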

\begin{proof}
From Corollary \ref{coro1}, we can suppose that the period function $T(h)$ has a critical point at $h=\bar h_i\in(h_i,h_{i+1}),\ i=1,2,\cdots,k-1$.
Note that each $T(\bar h_i)$ is finite, thus we can let $M=\max\{1+T(0), 1+T(\bar h_i),\ i=1,2\cdots,k-1\}$.

Since  $\lim_{h\rightarrow h^-_i}T(h)=+\infty$, there exists $\varepsilon_i>0$ such that
$T(h_i,\varepsilon)>M$ for $0<\varepsilon<\varepsilon_i$. Furthermore, since $T(\bar h_i,\varepsilon)$ is continuous with respect to $\varepsilon$ and $T(\bar h_i,0)=T(\bar h_i)$, there exists $\bar\varepsilon_i>0$ such that $T(\bar h_i,\varepsilon)<M$ for $0<\varepsilon<\bar \varepsilon_i,\ i=1,2,\cdots,k-1$. Similarly, there exist $\bar\varepsilon_0>0$ and $\bar\varepsilon_k>0$ such that $\lim_{h\rightarrow0^+}T(h,\varepsilon)<M$ for $0<\varepsilon<\bar \varepsilon_0$ and
$\lim_{h\rightarrow+\infty}T(h,\varepsilon)<M$ for $0<\varepsilon<\bar \varepsilon_k$.

Let $\varepsilon_0=\min\{\varepsilon_1,\varepsilon_2,\cdots,\varepsilon_k, \bar\varepsilon_0,\bar\varepsilon_1,\cdots,\bar\varepsilon_{k}\}$, then
when $0<\varepsilon<\varepsilon_0$,  $T(h_i, \varepsilon)>M, i=1, 2, \cdots, k$ and $T(\bar h_i, \varepsilon)<M, i=1, 2, \cdots, k-1$. Notice that
$T(h, \varepsilon)$ has a maximum point, defined as $\hat h_i$, on any interval $[\bar h_i, \bar h_{i+1}], i=1, 2, \cdots, k-2$. Obviously
$\hat h_i\not=\bar h_i, \bar h_{i+1}$, thus $\hat h_i$'s are different. Similarly, there are at least one  local maximum point on
$(0, \bar h_1)$ and $(\bar h_{k-1}, +\infty)$, then we obtain $k$ different local maximum points. Similar discussion
shows that there are at least one  local minimum point on each interval
$(h_{i}, h_{i+1}), i=1, 2, \cdots, k-1$, then we obtain $k-1$ different local minimum points. Thus, we have Theorem \ref{Th1}.
\end{proof}

\begin{theorem}\label{Th1e}
Consider polynomial potential system of degree $2k$ with the form
\begin{equation}\begin{split}\label{Pe}
&\dot{x}=y,\\
&\dot{y}=-x\prod_{i=1}^{k-1}((x-\beta_i)^2+\varepsilon)(\beta_k-x),
\end{split}\end{equation}
where $0<\beta_1<\beta_2<\cdots<\beta_k$ are real constants and $\varepsilon>0$ is a sufficiently small parameter.
The period function corresponding to its period annulus has at least $2k-2$ critical points on $(0,\bar h_k)$, where
$\bar h_k=H(\beta_k,0,\varepsilon)$ with $H(x,y,\varepsilon)$ being a Hamiltonian first integral of system \eqref{Pe}.
\end{theorem}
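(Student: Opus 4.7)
The plan is to mirror the proof of Theorem \ref{Th1}, the only structural change being that the rightmost singularity on the positive $x$-axis is now a hyperbolic saddle (the simple zero $\beta_k$ of $g(x,0)$) rather than a cusp. I first analyze the unperturbed system ($\varepsilon=0$): because $g(x,0)$ has double zeros with no sign change at $\beta_1,\ldots,\beta_{k-1}$ while $g'(\beta_k,0)=-\beta_k\prod_{i=1}^{k-1}(\beta_k-\beta_i)^2<0$, the system has a center at $(0,0)$, $k-1$ cusps at $(\beta_i,0)$ for $1\le i\le k-1$, and a hyperbolic saddle at $(\beta_k,0)$. Setting $h_i=G(\beta_i,0)$, strict monotonicity of $G$ on $[0,\beta_k]$ (since $g\ge 0$ there, with isolated zeros) gives $0<h_1<h_2<\cdots<h_k$, and the period annulus around the origin is partitioned by the cuspidal loops $\{H=h_i\}$, $1\le i\le k-1$, and terminated by the saddle homoclinic loop $\{H=h_k\}$.

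Next I would establish the boundary behavior of the unperturbed period function $T(h)$, which is analytic on every $(h_i,h_{i+1})$. The analogue of Proposition \ref{Thi} shows $T(h)\to+\infty$ as $h\to h_i^{\pm}$ for $i=1,\ldots,k-1$ (cusp loops), while the classical logarithmic divergence at a hyperbolic saddle connection yields $T(h)\to+\infty$ as $h\to h_k^-$. Combined with $T(0^+)$ finite, Rolle's theorem on each $(h_i,h_{i+1})$ with $1\le i\le k-1$ produces a local minimum $\bar h_i$, giving $k-1$ local minima of the unperturbed period function.

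For the perturbed system with small $\varepsilon>0$ the cusps disappear since $(x-\beta_i)^2+\varepsilon>0$, but the saddle at $(\beta_k,0)$ persists and moves to critical level $\bar h_k=G(\beta_k,\varepsilon)\to h_k$. The period annulus now occupies $(0,\bar h_k)$ with $T(h,\varepsilon)\to+\infty$ as $h\to\bar h_k^-$. Following the continuity argument of Theorem \ref{Th1}, I set $M=\max\{1+T(0),\,1+\max_{1\le i\le k-1}T(\bar h_i)\}$; for $\varepsilon$ sufficiently small, $T(h_i,\varepsilon)>M$ for $1\le i\le k-1$, while $T(\bar h_i,\varepsilon)<M$ for $1\le i\le k-1$ and $\lim_{h\to 0^+}T(h,\varepsilon)<M$. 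On the ordered sequence $0<h_1<\bar h_1<h_2<\bar h_2<\cdots<h_{k-1}<\bar h_{k-1}<\bar h_k$, the values of $T(h,\varepsilon)$ alternate between below $M$ and above $M$, forcing (i) at least one local maximum in $(0,\bar h_1)$ and in each $(\bar h_i,\bar h_{i+1})$ with $1\le i\le k-2$, totaling $k-1$ maxima, and (ii) at least one local minimum in each $(h_i,h_{i+1})$ with $1\le i\le k-1$, totaling $k-1$ minima. Maxima have values exceeding $M$ and minima below $M$, so they are distinct, giving $2k-2$ critical points on $(0,\bar h_k)$.

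The main obstacle is exactly the one encountered in Theorem \ref{Th1}: verifying $T(h_i,\varepsilon)\to+\infty$ as $\varepsilon\to 0^+$ for each $1\le i\le k-1$, i.e.\ that the closed orbit at level $h_i$ of the perturbed system inherits the divergence of the period integral near the vanishing cusp $(\beta_i,0)$. The only new ingredient over the odd-degree case is the saddle (rather than cusp) blow-up at the right boundary; this is classical and accounts for the slightly weaker count $2k-2$ versus $2k-1$, since no local maximum is forced between $\bar h_{k-1}$ and $\bar h_k$, where $T(h,\varepsilon)$ may rise monotonically to $+\infty$.
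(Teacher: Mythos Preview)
Your proposal is correct and follows essentially the same route as the paper's own proof, which is itself a one-paragraph sketch reducing Theorem~\ref{Th1e} to the argument of Theorem~\ref{Th1} after noting that the unperturbed system has $k-1$ cusps and a saddle, and that the perturbed period annulus terminates at the saddle loop $\{H=\bar h_k\}$ where $T(h,\varepsilon)\to+\infty$. Your unpacking of the alternating-value/Rolle argument, the count of $k-1$ maxima and $k-1$ minima, and the observation that the saddle boundary costs one critical point compared with the odd-degree case are exactly what the paper intends by ``a similar proof as Theorem~\ref{Th1}''; the point you flag as the main obstacle ($T(h_i,\varepsilon)\to+\infty$ as $\varepsilon\to0^+$) is also asserted rather than proved in detail in the paper's treatment of Theorem~\ref{Th1}.
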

\begin{proof}
It is easy to verify that the unperturbed system \eqref{Pe}$|_{\varepsilon=0}$ has an elementary center at $(0,0)$, $k-1$ cusps at $(\beta_i,0), i=1,2,\cdots,k-1$ and a saddle at $(\beta_k,0)$.
Let $h_i=H(\beta_i,0,0), i=1,2,\cdots,k-1$, then we have similar conclusions as Proposition \ref{Thi} and Corollary \ref{coro1}. The perturbed system \eqref{Pe} has a center at $(0,0)$ and the period
annulus around this center is defined by $H(x,y,\varepsilon)=h, h\in(0,\bar h_k)$. Notice that the separatrix polycycle surrounding the period annulus is a saddle loop defined by $H(x,y,\varepsilon)=\bar h_k$.
Thus the period function $T(h,\varepsilon)$ corresponding to the period annulus of system \eqref{Pe} satisfies $\lim_{h\rightarrow \bar h_k^-} T(h,\varepsilon)=+\infty$. Theorem \ref{Th1e} follows from a similar proof as Theorem \ref{Th1}.
\end{proof}
\begin{remark}
In fact, a more general condition that $H(\beta_i,0)\neq0$ differs from each others than the condition $0<\beta_1<\beta_2<\cdots<\beta_k<+\infty$ still can guarantee that
Theorem \ref{Th1} holds. The latter one is just a special case of the former one. For Theorem \ref{Th1e}, a more general condition requires that $H(\beta_i,0)\neq0, i=1,2,\cdots,k-1$ differs from each others
and $|H(\beta_k,0)|>\max\{|H(\beta_i,0)|, i=1,2,\cdots,k-1\}$.
\end{remark}

\section{Proof of Theorem \ref{T2}}\label{sec:P}

In this section we investigate a lower bound for the number of critical periods for polynomial systems of degree $n$ and prove Theorem \ref{T2}.
We mainly study the case $n=2k+1$, and the case $n=2k$ will be discussed briefly.

Consider the polynomial differential system
\begin{equation}\begin{split}\label{P2}
&\dot{x}=f(y)=y\prod_{i=1}^k(y-\alpha_i)^2,\\
&\dot{y}=-g(x)=-x\prod_{i=1}^k(x-\beta_i)^2,
\end{split}\end{equation}
where $\alpha_i$'s and $\beta_i$'s are real constants, satisfying
\begin{equation*}\begin{split}
(\mathrm{H}):\ & h_{ij}=H(\beta_i,\alpha_j),\ i,j=0,1,\cdots,k \mbox{ differs from each others.}
\end{split}\end{equation*}
Here we denote $\alpha_0=\beta_0=0$ and $H(x,y)$ is a first integral of system \eqref{P2} with the form
\begin{equation}\label{H3}
H(x,y)=F(y)+G(x), \ \mbox{where}\ F(y)=\int_{0}^y f(s)\ \mathrm{d}s\ \mbox{and}\ G(x)=\int_{0}^x g(s)\ \mathrm{d}s.
\end{equation}

By the hypothesis $(\mathrm{H})$, it is easy to verify that $\alpha_i\neq\alpha_j\neq0$ and $\beta_i\neq\beta_j\neq0$ hold for all $i\neq j,\ i,j=1,2,\cdots,k$.
It follows that system \eqref{P2} has totally $(k+1)^2$ singularities, where $(0,0)$ is an elementary center, $(\beta_i,0)$ and $(0, \alpha_i)$ are cusps, and others with the forms $(\beta_i, \alpha_j)$ are degenerate singularities with $i,j=1,2,\cdots,k$.
Using the technique of blow up, the local phase portrait of system \eqref{P2} at the degenerate singularities $(\beta_i, \alpha_j)$ is as follows, see Figure \ref{pp1}.
\begin{figure}[ht]
\centering
\includegraphics[width=.4\textwidth]{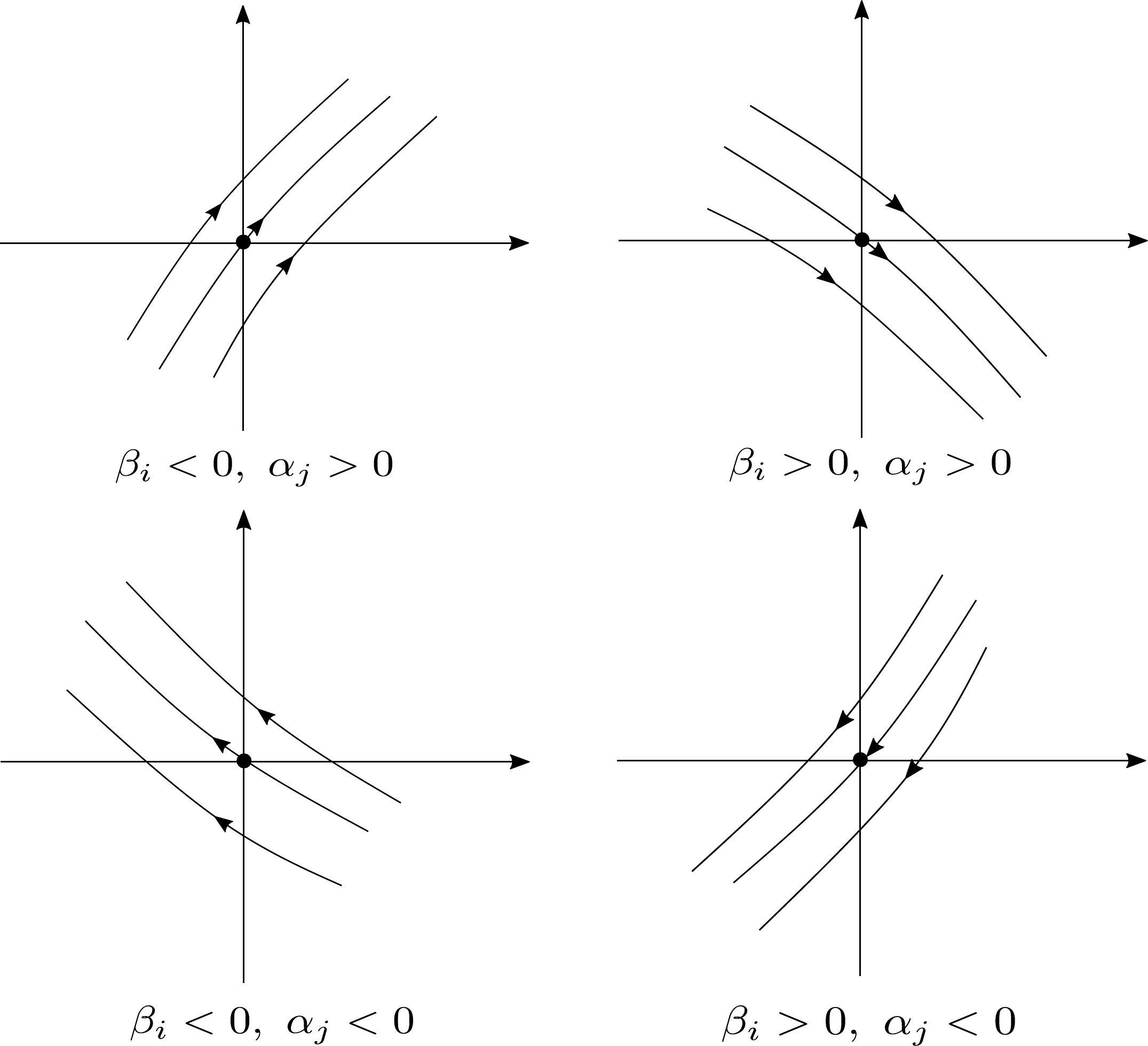}
\caption{\small{The local phase portrait of system \eqref{P2} at the degenerate singularities $(\beta_i, \alpha_j)$.}}
\label{pp1}
\end{figure}

Depending on the size of $h_{ij}$, we let
\begin{equation*}
(\mathrm{H'}): h_{s}=\{ \mbox{the $(s+1)$-th smallest element of $h_{ij}$ defined in } (\mathrm{H})\},\ s=0,1,\cdots,(k+1)^2-1.
\end{equation*}
That is, $h_0<h_1<\cdots<h_{(k+1)^2-1}$. Moreover, by the fact that the center $(0,0)$ is the minimum point of $H(x,y)$, we conclude that $h_0=H(0,0)=0$. Thus system \eqref{P2} has $(k+1)^2$ period annuli around the center, defined by $H(x,y)=h$ respectively on the intervals $(h_s,h_{s+1}),\ s=0,1,\cdots,(k+1)^2-1$ with $h_{(k+1)^2}=+\infty$.
Every Hamiltonian value $h_s,\ s=1,2,\cdots,(k+1)^2-1$ corresponds to a singular closed orbit passing through a unique singularity. An example of the phase portrait of system \eqref{P2} is shown in Figure \ref{pp2}.

\begin{figure}[ht]
\centering
\includegraphics[width=.35\textwidth]{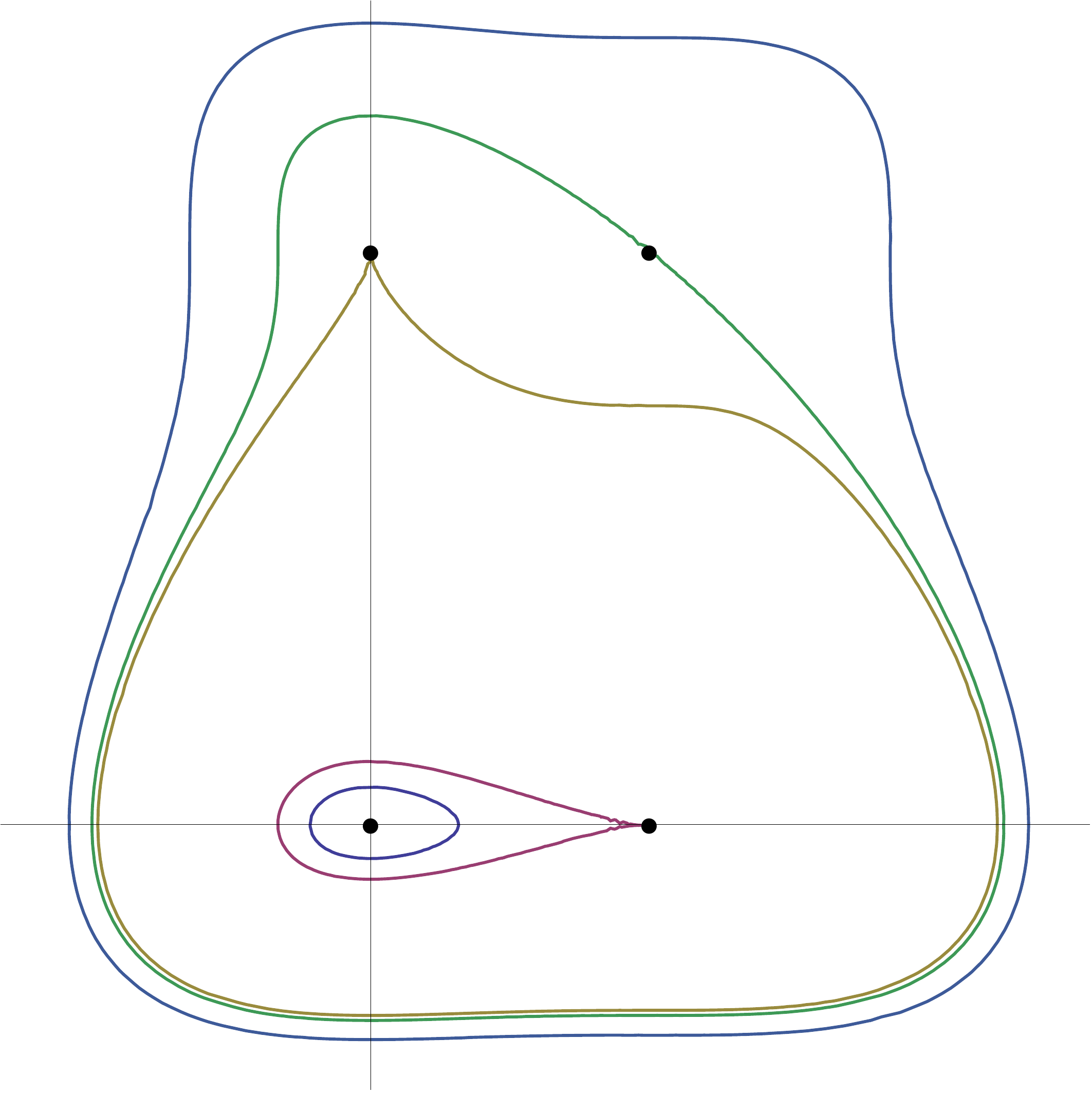}
\caption{\small{The phase portrait of system \eqref{P2} with $k=1$, $\alpha_1=4$ and $\beta_1=2$.}}
\label{pp2}
\end{figure}

Denote $\Gamma_h$ the periodic orbit defined by $H(x,y)=h,\ h\in\bigcup_{s=0}^{(k+1)^2-1}(h_s,h_{s+1})$, see \eqref{H3}. Then the period function $T(h)$, which assigns to $\Gamma_h$ its minimal period, can be given by
\begin{equation}\label{GT}
T(h)=\oint_{\Gamma_h}\frac{1}{f(y)}\ \mathrm{d}x, \qquad h\in\bigcup_{s=0}^{(k+1)^2-1}(h_s,h_{s+1}).
\end{equation}

Firstly, some properties of the period function $T(h)$ will be studied and presented.
\begin{proposition}\label{GThi}
Let $h_s$ be defined as the hypothesis $(\mathrm{H'})$.  Then for the period function $T(h)$ given in \eqref{GT}, there hold $\lim_{h\rightarrow h^-_s}T(h)=+\infty$ and $\lim_{h\rightarrow h^+_s}T(h)=+\infty$, $s=1,2,\cdots,(k+1)^2-1$.
\end{proposition}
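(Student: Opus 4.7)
The strategy is standard: for each critical value $h_s$, the singular closed orbit $\Gamma^* = \{H(x,y)=h_s\}$ contains a unique singular point $(x^*,y^*)$ (by hypothesis $(\mathrm{H})$), and the nearby regular orbits $\Gamma_h$ for $h$ close to $h_s$ are uniformly bounded and pass arbitrarily close to $(x^*,y^*)$. Away from any fixed neighborhood of $(x^*,y^*)$, the integrand $1/f(y)$ (or equivalently $-1/g(x)$ on arcs where $f(y)=0$) is bounded and the time contribution is uniformly bounded. Hence it suffices to localize and show that the time spent by $\Gamma_h$ in a small neighborhood of $(x^*,y^*)$ tends to $+\infty$ as $h\to h_s^{\pm}$. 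I would carry this out by case analysis on the type of the distinguished singularity.

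\textbf{Case 1: $(x^*,y^*)=(\beta_i,0)$, a cusp on the $x$-axis.} Write $g(x)=(x-\beta_i)^2 r(x)$ with $r(\beta_i)\neq 0$ and $f(y)=y\,q(y)$ with $q(0)\neq 0$. Then $G(x)-G(\beta_i)=\tfrac{r(\beta_i)}{3}(x-\beta_i)^3+O((x-\beta_i)^4)$ and $F(y)=\tfrac{q(0)}{2}y^2+O(y^3)$. On the orbit $\Gamma_h$ with $h$ near $h_s=G(\beta_i)$, the relation $F(y)=h-G(x)$ forces $y\sim C|x-\beta_i|^{3/2}$ for a nonzero constant $C$ on the side where the orbit actually exists. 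Computing the period contribution near the cusp via $dt=dx/f(y)$ gives an integrand of order $|x-\beta_i|^{-3/2}$, which is non-integrable at $x=\beta_i$; a standard tail estimate (modelled on the proof of Lemma~5.1 in \cite{YZ}) yields $T(h)\to+\infty$ as $h\to h_s^{\pm}$.

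\textbf{Case 2: $(x^*,y^*)=(0,\alpha_j)$, a cusp on the $y$-axis.} By the evident $x\leftrightarrow y$ symmetry in the construction, the analysis is identical after switching to $dt=-dy/g(x)$ and using $g(x)=x\,\tilde r(x)$ with $\tilde r(0)\neq 0$ together with $f(y)=(y-\alpha_j)^2\,\tilde q(y)$.

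\textbf{Case 3: $(x^*,y^*)=(\beta_i,\alpha_j)$ with $i,j\ge 1$, a degenerate singularity.} Here both factors have a double zero: $g(x)=(x-\beta_i)^2 r(x)$, $f(y)=(y-\alpha_j)^2 q(y)$ with $r(\beta_i),q(\alpha_j)\neq 0$. Subtracting $h_s=F(\alpha_j)+G(\beta_i)$ yields
\begin{equation*}
\tfrac{q(\alpha_j)}{3}(y-\alpha_j)^3+\tfrac{r(\beta_i)}{3}(x-\beta_i)^3=h-h_s+\text{h.o.t.},
\end{equation*}
so the local branches of $\Gamma_h$ satisfy $y-\alpha_j=-\lambda(x-\beta_i)+o(x-\beta_i)$ with $\lambda=(r(\beta_i)/q(\alpha_j))^{1/3}$. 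Consequently $f(y)\sim q(\alpha_j)\lambda^2(x-\beta_i)^2$ along the branch, and $dt=dx/f(y)$ integrates like $\int dx/(x-\beta_i)^2$, again diverging. The information needed to know that some branch of $\Gamma_h$ truly does approach $(\beta_i,\alpha_j)$ from both sides $h\to h_s^{\pm}$ is precisely the local phase portrait given in Figure \ref{pp1}: the blown-up picture provides two hyperbolic-like sectors, one containing orbits with $h$ slightly below $h_s$ and one with $h$ slightly above.

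\textbf{Main obstacle.} The only delicate step is Case~3. For the two cuspidal cases, divergence is essentially a known calculation. For the degenerate singularity $(\beta_i,\alpha_j)$, one must (a) justify via blow-up that the separatrix structure through $(\beta_i,\alpha_j)$ indeed separates two distinct period annuli (so that both one-sided limits are meaningful), and (b) verify that the formal asymptotic $y-\alpha_j\sim -\lambda(x-\beta_i)$ correctly identifies the branch of $\Gamma_h$ that sweeps near $(\beta_i,\alpha_j)$ as $h\to h_s^{\pm}$. Once these geometric points are settled by the blow-up analysis already invoked in the paper, the divergence of $T(h)$ follows from the elementary integral estimates above.
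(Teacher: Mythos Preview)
Your proposal is correct and matches the paper's approach: the paper actually states Proposition~\ref{GThi} without a printed proof (treating it as standard, in parallel with Proposition~\ref{Thi}), but the intended argument is exactly your three-case split according to whether the singular point on $\{H=h_s\}$ is $(\beta_i,0)$, $(0,\alpha_j)$, or $(\beta_i,\alpha_j)$, with the same local expansions of $G$ and $F$ near the singularity and the same reduction to a nonintegrable singularity in $\int dx/f(y)$. Your identification of Case~3 and the blow-up geometry of Figure~\ref{pp1} as the only genuinely delicate point is accurate, and your exponent $|x-\beta_i|^{-3/2}$ in Case~1 is in fact sharper than the logarithmic lower bound one gets from the cruder estimate $f(y(x,h_s))\le M(\beta_i-x)$.
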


\begin{proposition}\label{Ppro}
Let $h_s$ be defined as the hypothesis $(\mathrm{H'})$. Then for the period function $T(h)$ given in \eqref{GT}, we have that $T(0)=\lim_{h\rightarrow 0^+}T(h)=2\pi/(\prod_{i=1}^k|\alpha_i\beta_i|)$ and $\lim_{h\rightarrow +\infty}T(h)=0$.
\end{proposition}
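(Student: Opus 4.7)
The plan is to handle the two limits separately: use linearization at the origin for $T(0)$, and a rescaling at infinity for $\lim_{h\to+\infty}T(h)$.

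For $T(0)=2\pi/\prod_{i=1}^{k}|\alpha_i\beta_i|$, this is simply the period of the linearization of \eqref{P2} at the center $(0,0)$. One expands $f(y)=\bigl(\prod_{i=1}^{k}\alpha_i^2\bigr)y+O(y^2)$ and $g(x)=\bigl(\prod_{i=1}^{k}\beta_i^2\bigr)x+O(x^2)$, so the linear part at $(0,0)$ is $\bigl(\begin{smallmatrix}0&\prod\alpha_i^2\\-\prod\beta_i^2&0\end{smallmatrix}\bigr)$ with eigenvalues $\pm i\prod_{i=1}^{k}|\alpha_i\beta_i|$. Since $(0,0)$ is an elementary (nondegenerate) center of an analytic system, the standard continuity of the period function at such a center yields $\lim_{h\to 0^+}T(h)=2\pi/\prod_{i=1}^{k}|\alpha_i\beta_i|$.

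For $\lim_{h\to+\infty}T(h)=0$ I would use a scaling argument based on the fact that $F(y)=\tfrac{1}{2k+2}y^{2k+2}+$ (lower order) and similarly $G(x)=\tfrac{1}{2k+2}x^{2k+2}+$ (lower order). Set $\lambda=\lambda(h):=\bigl((2k+2)h\bigr)^{1/(2k+2)}$, introduce the rescaled phase variables $(u,v)=(x/\lambda,y/\lambda)$, and rescale time by $\tau=\lambda^{2k}t$. Then the orbit $\Gamma_h$ becomes the level set $\tilde H_h(u,v):=H(\lambda u,\lambda v)/\lambda^{2k+2}=1/(2k+2)$, with $\tilde H_h\to \tilde H_\infty:=(u^{2k+2}+v^{2k+2})/(2k+2)$ uniformly on compact sets, and the rescaled vector field $(du/d\tau,dv/d\tau)$ converges to $(v^{2k+1},-u^{2k+1})$. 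The limiting level curve $u^{2k+2}+v^{2k+2}=1$ is a closed orbit of finite period $T_0>0$ in $\tau$-time; by continuous dependence of the period on parameters, the $\tau$-period of $\Gamma_h$ tends to $T_0$, and undoing the time rescaling gives $T(h)=T_0/\lambda^{2k}+o(\lambda^{-2k})\to 0$ as $h\to+\infty$.

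The main obstacle is the rigor of the rescaling argument: I need to confirm that the rescaled orbit $\{\tilde H_h=1/(2k+2)\}$ is contained in a fixed compact neighborhood of $\{u^{2k+2}+v^{2k+2}=1\}$ uniformly in large $h$, so that uniform convergence of the vector field actually yields convergence of the period. This should follow from the fact that $\tilde H_h-\tilde H_\infty=O(1/\lambda)$ uniformly on compact sets while $\tilde H_\infty$ is proper (its sub-level sets are compact), forcing $\{\tilde H_h=1/(2k+2)\}$ into a bounded region independent of $h$; after this, the passage to the limit for the period is standard.
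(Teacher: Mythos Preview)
Your proof is correct but proceeds quite differently from the paper's.

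For the limit at $h=0$, the paper introduces explicit ``generalized polar'' coordinates $G(x)=h\cos^2\theta$, $F(y)=h\sin^2\theta$, writes $T(h)$ as an integral over $\theta\in[0,2\pi]$, and plugs in the leading-order expansions of $x(\theta,h)$ and $y(\theta,h)$ to read off the value $2\pi/\prod|\alpha_i\beta_i|$. Your linearization argument gets the same answer in one line by invoking the standard continuity of the period at a nondegenerate analytic center; this is cleaner, though it relies on a cited fact rather than being self-contained.

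For the limit at $h=+\infty$, the paper follows the method of \cite{YZ}: it parametrizes by $h=G(A)$, splits the period as $T(A)=AJ(A)+BJ(-B)$ with $G(-B)=G(A)$, and carries out a full asymptotic expansion of $J(R)$ in powers of $1/R$ via explicit series for $y_{\pm}(Rz,R)$, eventually obtaining $T(A)=4(2k+1)a_0/A^{2k}+\cdots$. Your rescaling $(u,v)=(x/\lambda,y/\lambda)$, $\tau=\lambda^{2k}t$ with $\lambda=((2k+2)h)^{1/(2k+2)}$ is a more geometric version of exactly the same idea (isolating the top-degree homogeneous part of $H$), and yields $T(h)\sim T_0/\lambda^{2k}$ directly by continuous dependence rather than by series manipulation. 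The paper's computation gives more: it produces the precise leading constant and in principle further terms. Your argument is shorter and conceptually transparent, which is all that is needed here since only $T(h)\to 0$ is used downstream. The compactness issue you flag is real but is handled by the observation that the lower-order terms in $\tilde H_h$ have total degree $\le 2k+1$ with coefficients $O(1/\lambda)$, so a crude comparison of the $(2k+2)$-homogeneous part against the remainder bounds the level set $\{\tilde H_h=1/(2k+2)\}$ inside, say, $\{u^{2k+2}+v^{2k+2}\le 2\}$ for all large $h$; after that the limit of the $\tau$-period is routine.
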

\begin{proof}
Firstly, using the transformation $G(x)=h\cos^2\theta, F(y)=h\sin^2\theta, x\cos\theta\geq0, y\sin\theta\geq0$, $h\in(0,h_1)$, the period function defined by \eqref{GT} becomes
\begin{equation}\label{TTT}
T(h)=\int_{0}^{2\pi}\frac{2h\sin\theta\cos\theta}{f(y(\theta,h))g(x(\theta,h))}\mathrm{d}\theta,
\end{equation}
where $x(\theta,h)$ and $y(\theta,h)$ are implicit functions uniquely determined by the transformation above and they have the asymptotic expressions
\begin{equation*}\begin{split}
x(\theta,h)&=\frac{\sqrt{2h}}{\prod_{i=1}^k|\beta_i|}\cos\theta+O(h\cos^2\theta),\\
y(\theta,h)&=\frac{\sqrt{2h}}{\prod_{i=1}^k|\alpha_i|}\sin\theta+O(h\sin^2\theta).
\end{split}\end{equation*}
Thus the period function \eqref{TTT} has the expression
\begin{equation*}
T(h)=\int_{0}^{2\pi}\frac{2h\sin\theta\cos\theta}{2h\prod_{i=1}^k|\alpha_i\beta_i|\sin\theta\cos\theta+o(h\sin\theta\cos\theta)}\mathrm{d}\theta.
\end{equation*}
The first conclusion follows by taking limit for $T(h)$ when $h\rightarrow0^+$.

In the following, we will prove the second conclusion using the method in \cite{YZ}. Consider the transformation $h=G(A), A>x_2(h_s)>0$ sufficiently large, where $x_2(h_s)$ is the abscissa of the intersection point of the closed orbit defined by $H(x,y)=h_s$ with the positive $x$-axis, the first integral \eqref{H3} becomes
$F(y)+G(x)=G(A)$, and the period function $T(h)$ becomes
\begin{equation}\begin{split}\label{TA}
T(A)=&\ \int_{0}^A\frac{\mathrm{d}x}{f(y_+(x,A))}+\int_{A}^0\frac{\mathrm{d}x}{f(y_-(x,A))}+\int_{0}^{-B}\frac{\mathrm{d}x}{f(y_-(x,-B))}+\int_{-B}^0\frac{\mathrm{d}x}{f(y_+(x,-B))}\\
=&\ \int_{0}^A\left(\frac{1}{f(y_+(x,A))}-\frac{1}{f(y_-(x,A))}\right)\mathrm{d}x+\int_{-B}^0\left(\frac{1}{f(y_+(x,-B))}-\frac1{f(y_-(x,-B))}\right)\mathrm{d}x\\
=&\ AJ(A)+BJ(-B),
\end{split}\end{equation}
where $B$ satisfies $G(-B)=G(A)$, $y_+(x,R)$ and $y_-(x,R)$ represent the positive solution and the negative solution of $F(y)+G(x)=G(R)$ respectively, and
\[
J(R)=\int_{0}^1\left(\frac{1}{f(y_+(Rz,R))}-\frac{1}{f(y_-(Rz,R))}\right)\mathrm{d}z.
\]
Note that
\begin{equation}\begin{split}
G(R)-G(Rz)=&\sum_{i=1}^{2k+1}\frac{b_iR^{i+1}(1-z^{i+1})}{i+1}\\
=&\frac{R^{2k+2}}{2k+2}(1-z^{2k+2})\left(1+\sum_{i=1}^{2k}\frac{(2k+2)b_i}{i+1}e_i(z)R^{i-2k-1}\right),
\end{split}\end{equation}
where $b_{i}, i=1,2\cdots,2k+1$ are the coefficients of $g(x)=x\prod_{i=1}^k(x-\beta_i)^2=\sum_{i=1}^{2k+1}b_ix^i$ with $b_{2k+1}=1$, and
\[
e_i(z)=\frac{1-z^{i+1}}{1-z^{2k+2}}\in(0,1],\quad \mbox{for}\ z\in[0,1),\ 1\leq i\leq 2k.
\]
It is easy to see that for $\varepsilon>0$ sufficiently small, there eixsts $|R|>(2k+2)M/\varepsilon+1$ such that
\[
\left|\sum_{i=1}^{2k}\frac{(2k+2)b_i}{(i+1)}e_i(z)R^{i-2k-1}\right|\leq(2k+2)M\left(\frac{1}{|R|}+\frac{1}{|R|^2}
+\cdots+\frac{1}{|R|^{2k}}\right)\leq\frac{(2k+2)M}{|R|-1}<\varepsilon,
\]
where $M=\max\{|b_i|,i=1,2,\cdots,2k\}$. Notice that the leading coefficient of the function $F(y)$ is also $1/(2k+2)$, thus we have the expansions of $y_{+}(Rz,R)$ and $y_{-}(Rz,R)$ as follows
\begin{equation}\begin{split}\label{y+y-}
y_{+}(Rz,R)=&\ |R|(1-z^{2k+2})^{\frac{1}{2k+2}}\left(1+\frac{f_1(z)}{R}+\frac{f_2(z)}{R^2}+\cdots\right),\\
y_{-}(Rz,R)=&-|R|(1-z^{2k+2})^{\frac{1}{2k+2}}\left(1+\frac{g_1(z)}{R}+\frac{g_2(z)}{R^2}+\cdots\right).
\end{split}\end{equation}
It follows from the uniform convergence of the series \eqref{y+y-} for $z\in[0,1]$ and $|R|>(2k+2)M/\varepsilon+1$ that the expansion of the integrand of $J(R)$ is uniformly convergent, and
\begin{equation*}\begin{split}
J(R)=&\int_{0}^1\frac{f(y_-(Rz,R))-f(y_+(Rz,R))}{f(y_+(Rz,R))f(y_-(Rz,R))}\mathrm{d}z\\
=&\int_{0}^1\frac{(y_--y_+)(\prod_{i=1}^k\alpha_i^2+\cdots+y_-^{2k}+\cdots+y_+^{2k})}{\prod_{i=1}^k\alpha_i^4y_+y_-+\cdots+y_+^{2k+1}y_-^{2k+1}}\mathrm{d}z\\
=&\int_{0}^1\frac{-2|R|(1\!-\!z^{2k+2})^{\frac{1}{2k+2}}\left(1\!+\!\frac{u_1(z)}{2R}\!+\!\cdots\right)
\left((2k\!+\!1)R^{2k}(1\!-\!z^{2k+2})^{\frac{2k}{2k+2}}+\cdots\right)}
{-\prod\limits_{i=1}^k\alpha_i^4R^2(1\!-\!z^{2k+2})^{\frac{2}{2k+2}}\left(1\!+\!\frac{u_1(z)}{R}\!+\!\cdots\right)\!+\!\cdots-R^{4k+2}(1\!-\!z^{2k+2})^{\frac{2k+1}{k+1}}(1\!+\!\cdots)}\mathrm{d}z\\
=&\int_{0}^1\frac{2(2k+1)}{|R|^{2k+1}(1-z^{2k+2})^{\frac{2k+1}{2k+2}}}\left(1+\frac{v_1(z)}{R}+\frac{v_2(z)}{R^2}+\cdots\right)\mathrm{d}z\\
=&\frac{2(2k+1)}{|R|^{2k+1}}\left(a_0+\frac{a_1}{R}+\cdots\right),
\end{split}\end{equation*}
where $u_1(z)=f_1(z)+g_1(z)$,
\[
a_0=\int_{0}^1\frac{1}{(1-z^{2k+2})^{\frac{2k+1}{2k+2}}}\mathrm{d}z, \quad a_1=\int_{0}^1\frac{v_1(z)}{(1-z^{2k+2})^{\frac{2k+1}{2k+2}}}\mathrm{d}z.
\]
Hence the period function \eqref{TA} has the expansion
\[
T(A)=AJ(A)+BJ(-B)=2(2k+1)\left(\left(\frac{1}{A^{2k}}+\frac{1}{B^{2k}}\right)a_0+\left(\frac{1}{A^{2k+1}}-\frac{1}{B^{2k+1}}\right)a_1+\cdots\right).
\]
By Lemma 5.2 of \cite{YZ}, we know that
\[
B=B(A)=A(1+\frac{c_1}{A}+\frac{c_2}{A^2}+\cdots),\quad A\gg 1.
\]
Further, we have
\[
T(A)=\frac{4(2k+1)}{A^{2k}}\left(a_0-\frac{ka_0c_1}{A}+\cdots\right).
\]
The second conclusion holds since $\lim_{h\rightarrow+\infty}T(h)=\lim_{A\rightarrow+\infty}T(A)=0$.
\end{proof}

Since $T(h)$ is an analytic function on the interval $(h_s,h_{s+1})$,  by Proposition \ref{GThi} and Rolle's theorem, we have a corollary as follows.
\begin{corollary}\label{Pcoro}
Let $h_s$ be defined as the hypothesis $(\mathrm{H'})$. Then for the period function $T(h)$ given in \eqref{GT}, there exists at least one critical point on each interval $(h_s,h_{s+1}),\ s=1,2,\cdots,(k+1)^2-2$.
\end{corollary}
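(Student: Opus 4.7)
The plan is to combine Proposition \ref{GThi} (blowup of $T$ at each singular value $h_s$) with a standard interior-extremum argument, which is what the statement refers to as "Rolle's theorem." Fix any $s$ with $1 \leq s \leq (k+1)^2 - 2$. The key observation is that both endpoints of the open interval $(h_s, h_{s+1})$ are \emph{finite} singular Hamiltonian values at which $T$ diverges to $+\infty$, so $T$ must attain a finite minimum somewhere in the interior.

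First I would note that on $(h_s, h_{s+1})$ the period function $T(h)$ given by \eqref{GT} is analytic in $h$ (the orbit $\Gamma_h$ varies analytically with $h$ away from the separatrix polycycles), hence in particular continuous and differentiable. By Proposition \ref{GThi},
$$\lim_{h \to h_s^+} T(h) = \lim_{h \to h_{s+1}^-} T(h) = +\infty.$$
Picking any reference value $h^\ast \in (h_s, h_{s+1})$, the number $T(h^\ast)$ is finite, and the two blowups allow one to sandwich $h^\ast$ by points $a, b$ with $h_s < a < h^\ast < b < h_{s+1}$ and $T(a), T(b) > T(h^\ast)$. Compactness of $[a,b]$ then forces $T$ to attain its minimum at some $\hat h_s \in [a,b]$, and since the value at the endpoints strictly exceeds $T(h^\ast) \geq T(\hat h_s)$, this minimizer lies in the open interior $(a,b) \subset (h_s, h_{s+1})$. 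Fermat's theorem yields $T'(\hat h_s) = 0$, producing the required critical point. Equivalently, one can pick $a', b' \in (h_s, h_{s+1})$ with $T(a') = T(b')$ (possible by the intermediate value theorem applied on each side of $h^\ast$, using the two blowups) and apply Rolle's theorem literally, giving the same conclusion.

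Repeating this construction for each $s \in \{1, 2, \ldots, (k+1)^2 - 2\}$ produces critical points that lie in pairwise disjoint open intervals and are therefore pairwise distinct. There is no real obstacle at this stage: the technical work was entirely absorbed into Proposition \ref{GThi}, and Corollary \ref{Pcoro} is a short consequence. Note that the boundary intervals $(h_0, h_1) = (0, h_1)$ and $(h_{(k+1)^2-1}, +\infty)$ are deliberately excluded from the count, because by Proposition \ref{Ppro} the function $T$ has a finite limit ($2\pi / \prod_{i=1}^k|\alpha_i \beta_i|$ at $0^+$ and $0$ at $+\infty$), so the sandwiching argument based on two-sided blowup is unavailable there.
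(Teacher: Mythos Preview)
Your proof is correct and follows exactly the approach the paper intends: analyticity of $T(h)$ on each $(h_s,h_{s+1})$ combined with the two-sided blowup from Proposition~\ref{GThi} forces an interior minimum, hence a critical point by Fermat/Rolle. The paper states this in a single line preceding the corollary, and your write-up simply fills in the routine details (including the helpful remark on why the two boundary intervals are excluded).
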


In the following, we will study a perturbed system of system \eqref{P2} and show that
there exist polynomial systems such that their period functions corresponding to
the period annuli have at least $2k^2+4k-1=n^2/2+n-5/2$ critical points on $(0,+\infty)$.

Consider the perturbed system of system \eqref{P2} with the form
\begin{equation}\begin{split}\label{PP2}
&\dot{x}=f(y,\varepsilon)=y\prod_{i=1}^k((y-\alpha_i)^2+\varepsilon),\\
&\dot{y}=-g(x,\varepsilon)=-x\prod_{i=1}^k((x-\beta_i)^2+\varepsilon),
\end{split}\end{equation}
where $0<\varepsilon\ll1$ is a real parameter.

Obviously, system \eqref{PP2} is also a Hamiltonian system with Hamiltonian function
\begin{equation}\label{H4}
H(x,y,\varepsilon)=F(y,\varepsilon)+G(x,\varepsilon),\ \mbox{where}\ F(y,\varepsilon)=\int_{0}^y f(s,\varepsilon)\ \mathrm{d}s,\ G(x,\varepsilon)=\int_{0}^x g(s,\varepsilon)\ \mathrm{d}s.
\end{equation}
It is easy to verify that system \eqref{PP2} has a unique equilibrium at $(0,0)$, which is a global center.
Thus, the period annulus around the center of system \eqref{PP2} is defined by the Hamiltonian function $H(x,y,\varepsilon)=h,\ h\in(0,+\infty)$.
The period function $T(h,\varepsilon)$ corresponding to this period annulus is given by
\begin{equation}
T(h,\varepsilon)=\oint_{\Gamma_{h,\varepsilon}}\frac{1}{f(y,\varepsilon)}\ \mathrm{d}x,\qquad h\in(0,+\infty),
\end{equation}
where $\Gamma_{h,\varepsilon}$ is the periodic orbit defined by $H(x,y,\varepsilon)=h$.

We obtain a result on the lower bound for the number of critical periods for
polynomial systems as follows.
\begin{theorem}\label{Th2}
For the period annulus of system \eqref{PP2} with $\varepsilon$ sufficiently small, the corresponding period function $T(h,\varepsilon)$ has at least $2k^2+4k-1$ critical points on $(0,+\infty)$.
\end{theorem}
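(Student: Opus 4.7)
The plan is to adapt the argument of Theorem \ref{Th1} essentially verbatim to this setting, replacing the $k$ cuspidal loops of system \eqref{L1} by the $(k+1)^2-1$ singular closed orbits of \eqref{P2} supplied by hypothesis $(\mathrm{H}')$. By Corollary \ref{Pcoro} the unperturbed period function $T(h)$ from \eqref{GT} has at least one critical point $\bar h_s$ on each interval $(h_s,h_{s+1})$ for $s=1,\ldots,(k+1)^2-2$, and Proposition \ref{GThi} forces every such $\bar h_s$ to be a local minimum (since $T\to+\infty$ at both endpoints). First I would fix the constant $M=1+\max\{T(0),\,T(\bar h_1),\ldots,T(\bar h_{(k+1)^2-2})\}$, which is finite because each $T(\bar h_s)$ is.

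Next, using $\lim_{h\to h_s^\pm}T(h)=+\infty$, I would choose, for each $s=1,\ldots,(k+1)^2-1$, reference points $h_s^-\in(h_{s-1},h_s)$ and $h_s^+\in(h_s,h_{s+1})$ at which $T>M+1$ (with the convention $h_0=0$, $h_{(k+1)^2}=+\infty$). Because the perturbation in \eqref{PP2} destroys every non-central singularity of \eqref{P2}, the orbits $\Gamma_{h,\varepsilon}$ vary smoothly in $(h,\varepsilon)$ at each of the finitely many reference points $\bar h_s$ and $h_s^\pm$, so $T(h,\varepsilon)$ depends continuously on $\varepsilon$ there. A perturbed analogue of Proposition \ref{Ppro} applies unchanged, since the leading terms of $f(y,\varepsilon)$ and $g(x,\varepsilon)$ are independent of $\varepsilon$ and the linearization at the origin depends smoothly on $\varepsilon$; this yields $\lim_{h\to 0^+}T(h,\varepsilon)$ close to $2\pi/\prod_i|\alpha_i\beta_i|$ and $\lim_{h\to+\infty}T(h,\varepsilon)=0$. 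Choosing $\varepsilon_0>0$ small enough, I can therefore arrange
\begin{equation*}
T(h_s^\pm,\varepsilon)>M\quad\text{for all }s,\qquad T(\bar h_s,\varepsilon)<M\quad\text{for all }s,
\end{equation*}
together with $T(h,\varepsilon)<M$ for $h$ close to $0$ and for $h$ sufficiently large.

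The final step is a count of sign changes of $T(\cdot,\varepsilon)-M$. On each of the $(k+1)^2-3$ closed intervals $[\bar h_s,\bar h_{s+1}]$, $s=1,\ldots,(k+1)^2-3$, the endpoint values are below $M$ while $T(h_{s+1}^\pm,\varepsilon)>M$ in the interior, producing a local maximum; the same argument applied to $(0,\bar h_1)$ (where $h_1^\pm$ lies above $M$) and to $(\bar h_{(k+1)^2-2},+\infty)$ (where $h_{(k+1)^2-1}^\pm$ lies above $M$) produces two more, for $(k+1)^2-1$ local maxima in all. Dually, on each interval $(h_s,h_{s+1})$ with $s=1,\ldots,(k+1)^2-2$, the witnesses $h_s^+$ and $h_{s+1}^-$ satisfy $T>M$ while $\bar h_s$ sits below $M$ inside, yielding $(k+1)^2-2$ local minima. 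Since the intervals used for maxima and for minima are pairwise disjoint, these $2(k+1)^2-3=2k^2+4k-1$ critical points are distinct. The main obstacle, as in Theorem \ref{Th1}, is guaranteeing the joint $(h,\varepsilon)$-continuity at the chosen reference points while the unperturbed $T(h)$ diverges at each $h_s$; this is handled by fixing the finitely many witnesses $h_s^\pm$ in advance of shrinking $\varepsilon$, so that continuity of $T(h,\varepsilon)$ in $\varepsilon$ need only be invoked at a fixed finite set of $h$-values.
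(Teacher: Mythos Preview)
Your proof is correct and follows essentially the same approach as the paper: fix $M$ above the finite values $T(0)$ and $T(\bar h_s)$, use continuity in $\varepsilon$ to preserve the inequalities $T(\bar h_s,\varepsilon)<M$ and $T>M$ near each $h_s$, then count $(k+1)^2-1$ local maxima and $(k+1)^2-2$ local minima via Rolle's theorem. The only (harmless) difference is that you introduce auxiliary witness points $h_s^\pm$ near each singular level where the unperturbed $T$ is already large and finite, whereas the paper evaluates $T(h_s,\varepsilon)$ directly at the singular levels themselves; your variant sidesteps having to justify why $T(h_s,\varepsilon)$ is large when the unperturbed $T(h_s)$ is infinite, at the small cost of needing the $h_s^\pm$ chosen close enough to $h_s$ to lie between $\bar h_{s-1}$ and $\bar h_s$.
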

\begin{proof}
Firstly, from Corollary \ref{Pcoro}, we can suppose that the period function $T(h)$ has a critical point at $h=\bar h_s\in(h_s,h_{s+1})$.
Note that each $T(\bar h_s)$ is finite, thus we let $M=\max\{T(0)+1,\ T(\bar h_s)+1,\ s=1,2\cdots,(k+1)^2-2\}$. Since $T(\bar h_s,\varepsilon)$ is continuous with respect to $\varepsilon$ and $T(\bar h_s,0)=T(\bar h_s)$, there exists $\bar\varepsilon_s>0$ such that $T(\bar h_s,\varepsilon)<M$ for $0<\varepsilon<\bar \varepsilon_s$. Similarly, there exist $\bar\varepsilon_0>0$ and $\bar\varepsilon_{(k+1)^2-1}>0$ such that $\lim_{h\rightarrow0^+}T(h,\varepsilon)<M$ holds for $0<\varepsilon<\bar \varepsilon_0$ and
$\lim_{h\rightarrow+\infty}T(h,\varepsilon)<M$ holds for $0<\varepsilon<\bar\varepsilon_{(k+1)^2-1}$.

Secondly, by Proposition \ref{GThi}, we know that $\lim_{h\rightarrow h^-_s}T(h)=+\infty$, thus it is not difficult to verify that
for $M$ defined above, there exists $\varepsilon_s>0$ such that
$T(h_s,\varepsilon)>M$ for $0<\varepsilon<\varepsilon_s$.

Let $\varepsilon_0=\min\{\varepsilon_1,\varepsilon_2,\cdots,\varepsilon_{(k+1)^2-1}, \bar\varepsilon_0,\bar\varepsilon_1,\cdots,\bar\varepsilon_{{(k+1)^2-1}}\}$.
Applying Intermediate Value Theorem and  Rolle's theorem, we can show that the analytic function $T(h,\varepsilon)$ has at least $2k^2+4k-1$ critical points on $(0,+\infty)$.
Theorem \ref{Th2} is finished.
\end{proof}

\begin{theorem}\label{Th2e}
Consider polynomial system of degree $2k$ with the form
\begin{equation}\begin{split}\label{PPe}
&\dot{x}=y\prod_{i=1}^{k-1}((y-\alpha_i)^2+\varepsilon),\\
&\dot{y}=-x\prod_{i=1}^{k-1}((x-\beta_i)^2+\varepsilon)(\beta_k-x),
\end{split}\end{equation}
where $0<\varepsilon \ll1$ is a small parameter, and $\alpha_i$'s and $\beta_i$'s are real constants, satisfying
(i) $H(\beta_i,\alpha_j),\ i,j=0,1,\cdots,k-1$ differs from each others, with $\alpha_0=\beta_0=0$, and
(ii) $|H(\beta_k,0)|>\max\{|H(\beta_i,\alpha_j)|,\ i,j=0,1,\cdots,k-1\}$.
Here the function $H(x,y)$ is a first integral of the unperturbed system \eqref{PPe}$|_{\varepsilon=0}$.
Then the period function of system \eqref{PPe} corresponding to its period annulus has at least $2k^2-2=n^2/2-2$ critical points on $(0,\bar h_k)$ or $(\bar h_k,0)$ depending on $\beta_k>0$ or $\beta_k<0$, where
$\bar h_k=H(\beta_k,0,\varepsilon)$ with $H(x,y,\varepsilon)$ being a Hamiltonian first integral of system \eqref{PPe}.
\end{theorem}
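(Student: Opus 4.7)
The plan is to carry over the strategy of Theorem \ref{Th2} to the even-degree system \eqref{PPe}, with one substantive modification: the period annulus is now bounded above not by infinity but by the saddle loop through $(\beta_k,0)$. I assume $\beta_k>0$; the case $\beta_k<0$ will be symmetric. The unperturbed system \eqref{PPe}$|_{\varepsilon=0}$ has an elementary center at $(0,0)$, cusps at the $2(k-1)$ points $(\beta_i,0)$ and $(0,\alpha_j)$ with $1\leq i,j\leq k-1$, degenerate singularities at the $(k-1)^2$ points $(\beta_i,\alpha_j)$, and a hyperbolic saddle at $(\beta_k,0)$. Hypotheses (i) and (ii) guarantee that the $k^2-1$ nonzero values $H(\beta_i,\alpha_j)$ with $(i,j)\in\{0,\ldots,k-1\}^2\setminus\{(0,0)\}$ are pairwise distinct and are all strictly dominated in absolute value by $\bar h_k=H(\beta_k,0)$, so the saddle loop bounds the period annulus around $(0,0)$ from outside. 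I label these interior critical values in increasing order as $0<h_1<\cdots<h_{k^2-1}<\bar h_k$.

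I first establish the natural analogs of Proposition \ref{GThi} and Corollary \ref{Pcoro} in this truncated range. Exactly the same cusp / translated-degenerate-singularity analysis as in Proposition \ref{GThi} gives $\lim_{h\to h_s^\pm}T(h)=+\infty$ for $s=1,\ldots,k^2-1$, and standard saddle-loop asymptotics yield the logarithmic blow-up $\lim_{h\to\bar h_k^-}T(h)=+\infty$; a polar-angle computation as in the first part of Proposition \ref{Ppro} gives $T(0^+)<+\infty$. Applying Rolle's theorem on each of the $k^2-2$ interior intervals $(h_s,h_{s+1})$ and on $(h_{k^2-1},\bar h_k)$ then produces at least $k^2-1$ local minima $\bar h_1^*<\cdots<\bar h_{k^2-1}^*$ of the unperturbed $T$ on $(0,\bar h_k)$.

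I then mimic the proof of Theorem \ref{Th2} almost verbatim. Set
\[
M=\max\bigl\{T(0^+)+1,\ T(\bar h_s^*)+1:\ s=1,\ldots,k^2-1\bigr\},
\]
and pick $\varepsilon>0$ small enough that: (a) $T(\bar h_s^*,\varepsilon)<M$ for every $s$ and $T(h,\varepsilon)<M$ near $h=0$, by continuity of the period integral at regular $h$; (b) $T(h_s,\varepsilon)>M$ for each $s$, by a uniform-in-$\varepsilon$ lower bound on the integrand near each former singular point; and (c) since the factor $(\beta_k-x)$ is unperturbed, the hyperbolic saddle at $(\beta_k,0)$ persists with $\bar h_k(\varepsilon)=H(\beta_k,0,\varepsilon)\to\bar h_k$ and $\lim_{h\to\bar h_k(\varepsilon)^-}T(h,\varepsilon)=+\infty$. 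On each subinterval $(\bar h_{s-1}^*,\bar h_s^*)$ (with the convention $\bar h_0^*:=0^+$) the perturbed period function is below $M$ at both endpoints yet above $M$ at the interior point $h_s$, so it has an interior local maximum $\hat h_s$, giving $k^2-1$ local maxima. Between consecutive maxima $\hat h_s,\hat h_{s+1}$ there is at least one local minimum, for $k^2-2$ minima; and on $(\hat h_{k^2-1},\bar h_k(\varepsilon))$ one more local minimum arises because the function descends from $\hat h_{k^2-1}$ and then blows up at the saddle loop. Summing, this yields $2k^2-2=n^2/2-2$ critical points on $(0,\bar h_k(\varepsilon))$.

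The one step that is genuinely new compared to the proof of Theorem \ref{Th2} is item (c), namely the persistence of the saddle loop together with the blow-up of $T(h,\varepsilon)$ as $h\to\bar h_k(\varepsilon)^-$; I expect this to be the main, though routine, obstacle. It is handled by invoking smooth dependence of the hyperbolic saddle and its invariant manifolds on $\varepsilon$ combined with the standard logarithmic estimate for the passage time near a hyperbolic saddle, and the rest of the argument is purely a bookkeeping adaptation of the odd-degree case.
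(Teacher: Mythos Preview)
Your proposal is correct and follows essentially the same route as the paper's proof: identify the $k^2-1$ distinct non-zero critical levels of the unperturbed system, invoke the analogs of Proposition~\ref{GThi} and Corollary~\ref{Pcoro}, note that the factor $(\beta_k-x)$ is unchanged by the perturbation so the hyperbolic saddle and its loop persist with $\lim_{h\to\bar h_k(\varepsilon)^-}T(h,\varepsilon)=+\infty$, and then rerun the max/min counting of Theorem~\ref{Th2} on the bounded annulus $(0,\bar h_k(\varepsilon))$. Your explicit bookkeeping (in particular, the extra local minimum on $(\hat h_{k^2-1},\bar h_k(\varepsilon))$ forced by the saddle-loop blow-up) is exactly the content hidden behind the paper's phrase ``follows from a similar proof as Theorem~\ref{Th2}.''
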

\begin{proof}
We only consider the case $\beta_k>0$ and the case $\beta_k<0$ can be proved similarly. The difference in both cases is that the origin is a center or a saddle.

It is easy to verify that the unperturbed system \eqref{PPe}$|_{\varepsilon=0}$ has an elementary center at $(0,0)$, $2k-2$ cusps at $(\beta_i,0)$ and $(0,\alpha_i),\ i=1,2,\cdots,k-1$, a saddle at $(\beta_k,0)$ and
some degenerate singularities at $(\beta_i,\alpha_j),\ i,j=1,2\cdots,k-1$. Let
$$h_{s}=\{ \mbox{the $(s+1)$-th smallest element of $H(\beta_i,\alpha_j)$ in condition (i)}\},s=0,1,\cdots,k^2-1,$$
then we have similar conclusions as Proposition \ref{GThi} and Corollary \ref{Pcoro}. The perturbed system \eqref{PPe} has a center at $(0,0)$ and the period
annulus around this center is defined by $H(x,y,\varepsilon)=h, h\in(0,\bar h_k)$. Notice that the separatrix polycycle surrounding the period annulus is a saddle loop defined by $H(x,y,\varepsilon)=\bar h_k$.
Thus the period function $T(h,\varepsilon)$ corresponding to the period annulus of system \eqref{PPe} satisfies $\lim_{h\rightarrow \bar h_k^-} T(h,\varepsilon)=+\infty$. Using condition (ii), Theorem \ref{Th2e} follows from a similar proof as Theorem \ref{Th2}.
\end{proof}

To finish the proof, we need to show that there exist polynomial systems satisfying the conditions in Theorem \ref{Th2} and Theorem \ref{Th2e} respectively. When $n=2k+1$, the following theorem
shows that almost all the systems having the form  (\ref{P2}) satisfy the condition in Theorem \ref{Th2}, i.e., the hypothesis $(\mathrm{H})$.
\begin{theorem}\label{thH}
Let $(\mathbf{\alpha_m},\mathbf{\beta_k})=(\alpha_1,\alpha_2,\cdots,\alpha_m,\beta_1,\beta_2,\cdots,\beta_k)\in\mathbb{R}^{m+k}$, and
\begin{eqnarray*}
V_1=\left\{(\mathbf{\alpha_k},\mathbf{\beta_k})\ \left|
\left.\begin{array}{l}
H(\beta_i,\alpha_j), i,j=0,1,\cdots,k\ \mbox{differ from each others}
\end{array} \right.\right. \right\},
\end{eqnarray*}
where $\alpha_0=\beta_0=0$, and the function $H(x,y)$ here is the first integral of
system \eqref{PP2}$|_{\varepsilon=0}$. Then the set $V_1$, which defines the parameter spaces of systems \eqref{PP2},
is an open dense set.
\end{theorem}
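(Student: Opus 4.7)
The plan is to realize $V_1$ as the complement in $\mathbb{R}^{2k}$ of a finite union of algebraic hypersurfaces, and then verify that each defining polynomial is not identically zero. For every ordered pair of distinct indices $(i,j)\neq(i',j')$ in $\{0,1,\dots,k\}^2$, define
\[
P_{(i,j),(i',j')}(\alpha,\beta)=H(\beta_i,\alpha_j)-H(\beta_{i'},\alpha_{j'})=\bigl(F(\alpha_j)-F(\alpha_{j'})\bigr)+\bigl(G(\beta_i)-G(\beta_{i'})\bigr),
\]
using the splitting of $H$ in \eqref{H3}. Each $P_{(i,j),(i',j')}$ is a polynomial in $(\alpha,\beta)$, so its zero set is closed; as there are only finitely many pairs, $V_1$ is the complement of a closed set and therefore open.

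For density it is enough to show that every $P_{(i,j),(i',j')}$ is a nonzero polynomial, since then its zero set is a proper algebraic subset of $\mathbb{R}^{2k}$ with empty interior, and a finite union of such sets remains nowhere dense. The $F$-part and $G$-part of $P_{(i,j),(i',j')}$ involve disjoint sets of variables, so $P_{(i,j),(i',j')}\equiv0$ would force both $F(\alpha_j)-F(\alpha_{j'})\equiv0$ and $G(\beta_i)-G(\beta_{i'})\equiv0$. It therefore suffices to prove: for every $j\neq j'$ in $\{0,1,\dots,k\}$, $F(\alpha_j)-F(\alpha_{j'})$ is a nonzero polynomial in $(\alpha_1,\dots,\alpha_k)$, and symmetrically for $G$.

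I would verify this by a degree count in a single variable. Assume $j\geq 1$ (if $j=0$, swap $j$ and $j'$ and use $F(0)=0$). Treating the other $\alpha_m$ as fixed parameters, write
\[
f(s)=(s^3-2\alpha_j s^2+\alpha_j^2 s)\prod_{m\neq j}(s-\alpha_m)^2
\]
and integrate term-by-term. Each factor $\prod_{m\neq j}(s-\alpha_m)^2$ is monic in $s$, so the coefficient of $\alpha_j^{2k+2}$ in $F(\alpha_j)=\int_0^{\alpha_j}f(s)\,ds$ equals
\[
\frac{1}{2k+2}-\frac{2}{2k+1}+\frac{1}{2k}=\frac{1}{2k(2k+1)(k+1)}\neq 0.
\]
On the other hand, $F(\alpha_{j'})$ for $j'\neq j$ has degree at most $2$ in $\alpha_j$, since $\alpha_j$ enters the integrand only through $(s-\alpha_j)^2$ and the limit $\alpha_{j'}$ is independent of $\alpha_j$ (and $F(\alpha_{j'})=0$ if $j'=0$). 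Since $2k+2>2$ for $k\geq1$, the difference is a nonzero polynomial, and the same argument with $g$ and $\beta$ handles the $G$-case.

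The only subtle point is the leading-coefficient computation: a priori the three contributions from $s^3$, $-2\alpha_j s^2$, and $\alpha_j^2 s$ could cancel, so one must verify explicitly that their weighted sum is $\tfrac{2}{2k(2k+1)(2k+2)}\neq 0$. Once this step is in hand, both openness and density follow immediately from the argument above.
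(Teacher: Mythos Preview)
Your proof is correct and in fact more complete than the paper's. The paper disposes of density in one line (``$V_1$ is the domain obtained by removing finite real hypersurfaces''), never checking that those hypersurfaces are proper, and proves openness by a direct $\varepsilon$--$\delta$ continuity argument on the values $H(\beta_i,\alpha_j)$. You instead work entirely algebraically: openness because zero sets of polynomials are closed, and density by an explicit degree count showing each $P_{(i,j),(i',j')}$ is a nonzero polynomial. Your approach actually fills the gap the paper leaves.

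One small logical slip worth cleaning up: the sentence ``$P_{(i,j),(i',j')}\equiv0$ would force both $F(\alpha_j)-F(\alpha_{j'})\equiv0$ and $G(\beta_i)-G(\beta_{i'})\equiv0$'' is not quite right as stated---a sum $A(\alpha)+B(\beta)$ in disjoint variables vanishes identically iff $A$ and $B$ are \emph{constants} summing to zero, not necessarily both zero. Fortunately your degree computation shows something stronger than ``nonzero'': whenever $j\neq j'$ the difference $F(\alpha_j)-F(\alpha_{j'})$ has degree $2k+2$ in $\alpha_j$, hence is nonconstant, and adding any polynomial in the $\beta$-variables cannot kill that term. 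So the argument goes through; just rephrase the reduction as ``since $(i,j)\neq(i',j')$, at least one of $j\neq j'$ or $i\neq i'$ holds, and the corresponding difference is nonconstant in its own variables, hence $P$ is nonzero.''
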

\begin{proof}
Obviously, the set $V_1$ is dense in $\mathbb{R}^{2k}$, since $V_1$ is the domain obtained by removing
finite real hypersurfaces from $\mathbb{R}^{2k}$.

Moreover, $V_1$ is an open set of $\mathbb{R}^{2k}$. Denote $H(x,y)=H(x,y,\mathbf{\alpha_k},\mathbf{\beta_k})$.
Take a point $(\tilde{\mathbf{\alpha_k}},\tilde{\mathbf{\beta_k}})\in V_1$,
it follows that $H(\tilde\beta_i,\tilde\alpha_j,\tilde{\mathbf{\alpha_k}},\tilde{\mathbf{\beta_k}}), i,j=0,1,\cdots,k$ differ from each others with $\tilde\alpha_0=\tilde\beta_0=0$.
Thus, we rearrange them as the hyperpiesis ($\mathrm{H}'$) and let
$h_{\min}=\min\{(h_{s+1}-h_s)/2, s=0,1,\cdots,(k+1)^2-2\}$. By the continuity of $H(x,y,\tilde{\mathbf{\alpha_k}},\tilde{\mathbf{\beta_k}})$ with respect to the parameters, there exist $\delta_s>0, s=1,2,\cdots,(k+1)^2-1$, such that $|H(\beta_i,\alpha_j,\mathbf{\alpha_k},\mathbf{\beta_k})-H(\tilde\beta_i,\tilde\alpha_j,\tilde{\mathbf{\alpha_k}},\tilde{\mathbf{\beta_k}})|<h_{\min}$
hold for $\|(\mathbf{\alpha_k},\mathbf{\beta_k})-(\tilde{\mathbf{\alpha_k}},\tilde{\mathbf{\beta_k}})\|<\delta_s$, where $i,j=0,1,\cdots,k$. Let
$\delta_{\min}=\min\{\delta_s, s=1,2,\cdots,(k+1)^2-1\}$. Then we have the $\delta_{\min}$ neighbourhood of $(\tilde{\mathbf{\alpha_k}},\tilde{\mathbf{\beta_k}})$ is also contained in $V_1$, and hence $V_1$ is open.
\end{proof}

At lat, we give two concrete examples.

{\bf Example 1.} Consider the polynomial system of degree $2k+1$
\begin{equation}\begin{split}\label{e2}
&\dot{x}=y\prod_{i=1}^k((y-i)^2+\varepsilon),\\
&\dot{y}=-x\prod_{i=1}^k((x-\mathrm{e}i)^2+\varepsilon),
\end{split}\end{equation}
where $0<\varepsilon\ll1$ is a real parameter and $\mathrm{e}$ is the natural constant.

Obviously, the unperturbed system \eqref{e2}$|_{\varepsilon=0}$ has a first integral
\begin{equation}\begin{split}\label{H2}
H(x,y)&=\int_{0}^y s\prod_{i=1}^k(s-i)^2\ \mathrm{d}s+\int_{0}^x s\prod_{i=1}^k(s-\mathrm{e}i)^2\ \mathrm{d}s,
\end{split}\end{equation}
and system \eqref{e2}$|_{\varepsilon=0}$ has $(k+1)^2$ singularities $(\mathrm{e}i,j),\ i,j=0,1,\cdots,k$.

First of all, we have an important result
to show that the hypothesis $(\mathrm{H})$ holds for the first integral \eqref{H2}.
\begin{proposition}\label{Hi}
The Hamiltonian values $H(\mathrm{e}i,j), i,j=0,1,\cdots,k$ differ from each others.
\end{proposition}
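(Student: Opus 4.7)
The plan is to exploit the variable-separable structure $H(x,y) = F(y) + G(x)$ together with the transcendence of $\mathrm{e}$.

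First, I would perform the substitution $s = \mathrm{e}u$ in the integral defining $G(\mathrm{e}i)$:
$$G(\mathrm{e}i) = \int_0^{\mathrm{e}i} s\prod_{m=1}^k(s-\mathrm{e}m)^2\,\mathrm{d}s = \mathrm{e}^{2k+2}\int_0^i u\prod_{m=1}^k(u-m)^2\,\mathrm{d}u = \mathrm{e}^{2k+2}F(i),$$
where $F(y) = \int_0^y s\prod_{m=1}^k(s-m)^2\,\mathrm{d}s$. This yields the clean formula
$$H(\mathrm{e}i, j) = F(j) + \mathrm{e}^{2k+2}F(i), \qquad i,j = 0, 1, \ldots, k.$$

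Next, I would record two elementary facts about $F$: (a) for any nonnegative integer $n$, $F(n)$ is the definite integral of a polynomial with integer coefficients from $0$ to $n$, hence $F(n) \in \mathbb{Q}$; (b) since $F'(s) = s\prod_{m=1}^k(s-m)^2 \ge 0$ with equality only at finitely many points, $F$ is strictly increasing on $[0,\infty)$, so in particular the $k+1$ values $F(0), F(1), \ldots, F(k)$ are pairwise distinct rational numbers.

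Finally, I would argue by contradiction. Suppose $H(\mathrm{e}i, j) = H(\mathrm{e}i', j')$ for some $(i,j) \neq (i', j')$ in $\{0, 1, \ldots, k\}^2$. If $i = i'$, then $F(j) = F(j')$ forces $j = j'$ by monotonicity, a contradiction. If $i \neq i'$, then $F(i') - F(i)$ is a nonzero rational number, and
$$\mathrm{e}^{2k+2} = \frac{F(j) - F(j')}{F(i') - F(i)} \in \mathbb{Q},$$
which contradicts the transcendence of $\mathrm{e}$ (already the fact that every positive integer power of $\mathrm{e}$ is irrational suffices). Hence all $(k+1)^2$ Hamiltonian values are distinct.

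There is no genuine obstacle here; the only conceptual point is to recognise that the zeros of $g$ were placed at $\mathrm{e}i$ precisely so that the rescaling substitution manufactures the scalar $\mathrm{e}^{2k+2}$, after which transcendence of $\mathrm{e}$ does all the work. A perhaps more elegant way to phrase the same argument is to view the $(k+1)^2$ points $(F(i), F(j))$ as a lattice in $\mathbb{Q}^2$ and the map $(a,b) \mapsto b + \mathrm{e}^{2k+2} a$ as a linear functional with irrational slope, which is injective on this lattice.
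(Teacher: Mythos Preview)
Your proof is correct and follows essentially the same route as the paper: the substitution $s=\mathrm{e}u$ to obtain $H(\mathrm{e}i,j)=F(j)+\mathrm{e}^{2k+2}F(i)$, rationality and strict monotonicity of $F$ at integer arguments, and the contradiction with the irrationality of $\mathrm{e}^{2k+2}$. The only cosmetic difference is your closing remark recasting the argument as an injective linear functional on a rational lattice.
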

\begin{proof}
Firstly, by the transformation $s=\mathrm{e}u$,
\begin{equation*}\begin{split}
H(x,y)
&=\int_{0}^y s\prod_{i=1}^k(s-i)^2\ \mathrm{d}s+\mathrm{e}^2\int_{0}^{x/\mathrm{e}} u\prod_{i=1}^k(\mathrm{e}u-\mathrm{e}i)^2\ \mathrm{d}u\\
&=F(y)+\mathrm{e}^{2k+2}F(x/\mathrm{e}), \qquad \mbox{where}\ F(z)=\int_{0}^z s\prod_{i=1}^k(s-i)^2\ \mathrm{d}s.
\end{split}\end{equation*}
Hence the Hamiltonian values
$$H(\mathrm{e}i,j)=F(j)+\mathrm{e}^{2k+2}F(i).$$

Secondly, we derive two properties of the function $F(z)$. \\
($\mathrm a$) $F(i)$ and $F(j)$ are rational numbers, since $F(z)$ is a polynomial of $z$ with the rational coefficients.\\
($\mathrm{b}$) $F(i)<F(j)$, if $0\leq i<j$. It follows from $$F'(z)=z\prod_{i=1}^k(z-i)^2\geq0,\quad \mbox{for}\ z\geq0$$
that $F(z)$ increases on the interval $[0,+\infty)$.

For two different singularities $(\mathrm{e}i_1,j_1)$ and $(\mathrm{e}i_2,j_2)$, if $i_1=i_2$ and $j_1\neq j_2$, then by the property ($\mathrm{b}$),  $H(\mathrm{e}i_1,j_1)\neq H(\mathrm{e}i_2,j_2)$.
If $i_1\neq i_2$, using these two properties and proof by contradiction, we can show that
if $(\mathrm{e}i_1,j_1)$ and $(\mathrm{e}i_2,j_2)$ such that $H(\mathrm{e}i_1,j_1)=H(\mathrm{e}i_2,j_2)$,
then
$$\frac{F(j_1)-F(j_2)}{F(i_2)-F(i_1)}=\mathrm{e}^{2k+2},$$
which leads to a contradiction since the left side is a rational number and the right side is an irrational number. Therefore, the proposition follows.
\end{proof}

By Proposition \ref{Hi}, we let
\begin{equation*}
h_{s}=\{ \mbox{the $(s\!+\!1)$-th smallest element of $H(\mathrm{e}i,j)$}, \ i,j=0,1,\cdots,k\},\ s=0,1,\cdots,(k+1)^2-1.
\end{equation*}
It is easy to verify that system \eqref{e2} has a global center at the origin $(0,0)$, and has a first integral
\begin{equation*}
H(x,y,\varepsilon)=\int_{0}^y s\prod_{i=1}^k((s-i)^2+\varepsilon)\ \mathrm{d}s+\int_{0}^x s\prod_{i=1}^k((s-\mathrm{e}i)^2+\varepsilon)\ \mathrm{d}s.
\end{equation*}
The period annulus around the center of system \eqref{e2} is defined by the Hamiltonian function $H(x,y,\varepsilon)=h,\ h\in(0,+\infty)$.
Applying Theorem \ref{Th2}, we can deduce that for $\varepsilon$ sufficiently small,
the period function corresponding to the period annulus of system \eqref{e2} has at least $2k^2+4k-1$ critical points on the interval $(0,+\infty)$.

{\bf Example 2.} Consider the polynomial system of degree $2k$
\begin{equation}\begin{split}\label{e3}
&\dot{x}=y\prod_{i=1}^{k-1}((y-i)^2+\varepsilon),\\
&\dot{y}=-x\prod_{i=1}^{k-1}((x-\mathrm{e}i)^2+\varepsilon)(\mathrm{e}k^2-x),
\end{split}\end{equation}
where $0<\varepsilon\ll1$ is a real parameter and $\mathrm{e}$ is the natural constant.

Obviously, the unperturbed system \eqref{e3}$|_{\varepsilon=0}$ has a first integral
\begin{equation}\begin{split}\label{H2e}
H(x,y)&=\int_{0}^y s\prod_{i=1}^{k-1}(s-i)^2\ \mathrm{d}s+\int_{0}^x s\prod_{i=1}^{k-1}(s-\mathrm{e}i)^2(\mathrm{e}k^2-s)\ \mathrm{d}s,
\end{split}\end{equation}
and system \eqref{e3}$|_{\varepsilon=0}$ has $k(k+1)$ singularities $(\mathrm{e}i,j),\ i,j=0,1,\cdots,k-1$ and $(\mathrm{e}k^2,j),\ j=0,1,\cdots,k-1$,
where the singularity $(0,0)$ is a center and the singularity $(\mathrm{e}k^2,0)$ is a saddle.

In the following, we will verify that the conditions (i) and (ii) in Theorem \ref{Th2e} hold for the first integral \eqref{H2e}.
Similar as Proposition \ref{Hi}, it is easy to obtain that
\begin{proposition}\label{Hi1}
(i) The Hamiltonian values $H(\mathrm{e}i,j), i,j=0,1,\cdots,k-1$ differ from each others;
(ii) $H(\mathrm{e}k^2,0)>\max\{H(\mathrm{e}i,j),\ i,j=0,1,\cdots,k-1\}$.
\end{proposition}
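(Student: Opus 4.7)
The plan is to follow the template of Proposition~\ref{Hi}. Substituting $s = \mathrm{e}u$ in the second integral of (\ref{H2e}) factors out the needed power of $\mathrm{e}$ and gives
\begin{equation*}
H(\mathrm{e}i,\,j) = F(j) + \mathrm{e}^{2k+1}\, G(i),
\end{equation*}
where $F(z) = \int_0^z s\prod_{i=1}^{k-1}(s-i)^2\, ds$ and $G(z) = \int_0^z s\prod_{i=1}^{k-1}(s-i)^2(k^2 - s)\, ds$ are polynomials with rational coefficients. Two monotonicity observations drive the whole argument: $F$ is strictly increasing on $[0,+\infty)$ because $F'(z) = z\prod_{i=1}^{k-1}(z-i)^2 \geq 0$, and $G$ is strictly increasing on $[0, k^2]$ because $G'(z) = z\prod_{i=1}^{k-1}(z-i)^2(k^2 - z) \geq 0$ there. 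In particular, for distinct $i_1,i_2 \in \{0,1,\ldots,k-1\} \subset [0,k^2]$, the values $G(i_1)$ and $G(i_2)$ are distinct rationals.

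For part~(i) I would rerun the dichotomy of Proposition~\ref{Hi} verbatim. If $i_1 = i_2$ but $j_1 \ne j_2$, strict monotonicity of $F$ on $[0,k-1]$ gives $F(j_1) \ne F(j_2)$, hence distinct Hamiltonian values. If $i_1 \ne i_2$, the assumption $H(\mathrm{e}i_1,j_1) = H(\mathrm{e}i_2,j_2)$ rearranges to
\begin{equation*}
\mathrm{e}^{2k+1} = \frac{F(j_1) - F(j_2)}{G(i_2) - G(i_1)},
\end{equation*}
equating the transcendental number $\mathrm{e}^{2k+1}$ with a rational, a contradiction.

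For part~(ii) I would rewrite the target as
\begin{equation*}
H(\mathrm{e}k^2, 0) - H(\mathrm{e}i, j) = \mathrm{e}^{2k+1}\bigl[G(k^2) - G(i)\bigr] - F(j),
\end{equation*}
and use monotonicity of $G$ on $[0,k^2]$ and of $F$ on $[0,+\infty)$ to reduce the family of inequalities (over $i,j \in \{0,\ldots,k-1\}$) to the single inequality $\mathrm{e}^{2k+1}\bigl[G(k^2) - G(k-1)\bigr] > F(k-1)$. I expect this quantitative estimate to be the main (and only) substantive obstacle, but it is an exponential-versus-polynomial contest in $k$: a crude upper bound $F(k-1) \leq (k-1)^{2k}$ and a lower bound obtained by restricting the integral defining $G(k^2) - G(k-1)$ to the subinterval $[k,k+1]$, on which each factor $(s-i)^2 \geq 1$, yield $G(k^2) - G(k-1) \geq \int_k^{k+1} s(k^2 - s)\, ds$. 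The inequality is then straightforward to verify for all $k \geq 2$, while the case $k = 1$ is handled by the direct computation $G(1) = 1/6$ and $F(0) = 0$.
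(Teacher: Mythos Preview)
Your treatment of part~(i) is correct and coincides with the paper's proof.

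For part~(ii), the reduction to the single inequality $\mathrm{e}^{2k+1}\bigl[G(k^2)-G(k-1)\bigr]>F(k-1)$ is fine, but the final quantitative step fails. By replacing $\prod_{i=1}^{k-1}(s-i)^2$ with $1$ on $[k,k+1]$ you discard a factor of order $((k-1)!)^2$, and the resulting inequality
\[
\mathrm{e}^{2k+1}\int_k^{k+1}s(k^2-s)\,\mathrm{d}s \;>\; (k-1)^{2k}
\]
is \emph{not} an ``exponential-versus-polynomial contest'': the right side grows like $k^{2k}$, which dominates $k^3\mathrm{e}^{2k}$. Concretely, already at $k=6$ one has $\mathrm{e}^{13}\int_6^7 s(36-s)\,\mathrm{d}s\approx 8.5\times 10^{7}$ while $5^{12}\approx 2.4\times 10^{8}$, so the inequality is false. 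Thus the assertion that it is ``straightforward to verify for all $k\geq 2$'' does not hold.

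The fix is minor: keep the factorial, i.e.\ use $\prod_{i=1}^{k-1}(s-i)^2\geq ((k-1)!)^2$ on $[k,k+1]$, and pair it with a matching upper bound on $F(k-1)$. The paper takes a cleaner route that avoids any numerical comparison: it restricts the integral for $G(k^2)-G(k-1)$ to $[k,2k-1]$, shifts $s\mapsto s+k$, and then compares integrands pointwise with those of $F(k-1)$ on $[0,k-1]$ via $(s+k)(k^2-s-k)\geq s$ and $\prod_{i=1}^{k-1}(s+k-i)^2=\prod_{j=1}^{k-1}(s+j)^2\geq \prod_{j=1}^{k-1}(s-j)^2$. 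This yields $G(k^2)-G(k-1)>F(k-1)$ directly, and since $\mathrm{e}^{2k+1}>1$ the desired inequality follows.
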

\begin{proof}
Firstly, by the transformation $s=\mathrm{e}u$,
\begin{equation*}\begin{split}
H(x,y)
&=\int_{0}^y s\prod_{i=1}^{k-1}(s-i)^2\ \mathrm{d}s+\mathrm{e}^{2k+1}\int_{0}^{x/\mathrm{e}} u\prod_{i=1}^{k-1}(u-i)^2(k^2-u)\ \mathrm{d}u\\
&=F(y)+\mathrm{e}^{2k+1}G(x/\mathrm{e}),
\end{split}\end{equation*}
where
$$F(z)=\int_{0}^z s\prod_{i=1}^{k-1}(s-i)^2\ \mathrm{d}s,\qquad G(z)=\int_{0}^{z} s\prod_{i=1}^{k-1}(s-i)^2(k^2-s)\ \mathrm{d}s.$$
Hence the Hamiltonian values
$$H(\mathrm{e}i,j)=F(j)+\mathrm{e}^{2k+1}G(i).$$

Secondly, two properties of the functions $F(z)$ and $G(z)$ are derived. \\
($\mathrm a$) $G(i)$ and $F(j)$ are rational numbers, since $G(z)$ and $F(z)$ are polynomials of $z$ with the rational coefficients.\\
($\mathrm{b}$) $F(i)<F(j)$ and $G(i)<G(j)$, if $0\leq i<j$. It follows from
\begin{equation*}\begin{split}
&F'(z)=z\prod_{i=1}^{k-1}(z-i)^2\geq0,\quad \mbox{for}\ z\geq0,\\
&G'(z)=z\prod_{i=1}^{k-1}(z-i)^2(k^2-z)\geq0,\quad \mbox{for}\ 0\leq z\leq k^2,
\end{split}\end{equation*}
that $F(z)$ increases on the interval $[0,+\infty)$ and $G(z)$ increases on the interval $[0,k^2]$.

For two different singularities $(\mathrm{e}i_1,j_1)$ and $(\mathrm{e}i_2,j_2)$, if $i_1=i_2$ and $j_1\neq j_2$, then by the property ($\mathrm{b}$),  $H(\mathrm{e}i_1,j_1)\neq H(\mathrm{e}i_2,j_2)$.
If $i_1\neq i_2$, using these two properties and proof by contradiction, we can show that
if $(\mathrm{e}i_1,j_1)$ and $(\mathrm{e}i_2,j_2)$ such that $H(\mathrm{e}i_1,j_1)=H(\mathrm{e}i_2,j_2)$,
then
$$\frac{F(j_1)-F(j_2)}{G(i_2)-G(i_1)}=\mathrm{e}^{2k+1},$$
which leads to a contradiction since the left side is a rational number and the right side is an irrational number. Therefore, the conclusion (i) follows.

It follows from the property (b) that $$\max\{H(\mathrm{e}i,j),\ i,j=0,1,\cdots,k-1\}=H(\mathrm{e}(k-1),k-1).$$
Thus to finish conclusion (ii), we only need to prove $H(\mathrm{e}k^2,0)>H(\mathrm{e}(k-1),k-1)$. This is obvious for $k=1$, and for $k\geq2$, we have that
\begin{equation*}\begin{split}
H(\mathrm{e}k^2,0)=&e^{2k+1}\int_{0}^{k^2} s\prod_{i=1}^{k-1}(s-i)^2(k^2-s)\ \mathrm{d}s\\
=&e^{2k+1}G(k-1)+e^{2k+1}\int_{k-1}^{k^2} s\prod_{i=1}^{k-1}(s-i)^2(k^2-s)\ \mathrm{d}s\\
>&e^{2k+1}G(k-1)+e^{2k+1}\int_{k}^{2k-1} s\prod_{i=1}^{k-1}(s-i)^2(k^2-s)\ \mathrm{d}s\\
=&e^{2k+1}G(k-1)+e^{2k+1}\int_{0}^{k-1} (s+k)\prod_{i=1}^{k-1}(s+k-i)^2(k^2-s-k)\ \mathrm{d}s\\
>&e^{2k+1}G(k-1)+e^{2k+1}\int_{0}^{k-1} s\prod_{i=1}^{k-1}(s+k-i)^2\ \mathrm{d}s\\
>&e^{2k+1}G(k-1)+e^{2k+1}F(k-1)>H(\mathrm{e}(k-1),k-1).
\end{split}\end{equation*}
\end{proof}

It is easy to verify that system \eqref{e3} also has a center at the origin $(0,0)$ and a saddle at $(\mathrm{e}k^2,0)$. By Proposition \ref{Hi1} and Theorem \ref{Th2e}, we can deduce that for $\varepsilon$ sufficiently small,
the period function corresponding to the period annulus of system \eqref{e3} has at least $2k^2-2$ critical points on the interval $(0,\bar h_k)$ with $\bar h_k=H(\mathrm{e}k^2,0,\varepsilon)$, where
\begin{equation*}
H(x,y,\varepsilon)=\int_{0}^y s\prod_{i=1}^{k-1}((s-i)^2+\varepsilon)\ \mathrm{d}s+\int_{0}^x s\prod_{i=1}^{k-1}((s-\mathrm{e}i)^2+\varepsilon)(\mathrm{e}k^2-s)\ \mathrm{d}s.
\end{equation*}


\begin{thebibliography}{99}

\bibitem{C}
{\sc J. Chavarriga and M. Sabatini}, {\it A survey of isochronous centers}, Qual. Theory. Dyna. Sys., {\bf 1}(1999), 1-70.

\bibitem{CRZ}
{\sc X. Chen, V. G. Romanovski and W. Zhang}, {\it Critical periods of perturbations of reversible rigidly isochronous centers}, J. Differential Equations, {\bf 251(6)}(2011), 1505-1525.

\bibitem{CJ}
{\sc  C. Chicone and M. Jacobs}, {\it Bifurcation of critical periods for plane vector fields}, Trans. Amer. Math. Soc., {\bf 312(2)}(1989), 433-486.

\bibitem{CS}
{\sc S.-N. Chow and J. Sanders}, {\it On the number of critical points of the period}, J. Differential Equations, {\bf 64(1)}(1986), 51-66.

\bibitem{CGM}
{\sc  A. Cima, A. Gasull and F. Ma\~{n}osas}, {\it Period function for a class of Hamiltonian systems}, J. Differential Equations, {\bf 168(1)}(2000), 180-199.

\bibitem{CGS}
{\sc  A. Cima, A. Gasull and P.R. Silva}, {\it On the number of critical periods for planar polynomial systems}, Nonlinear Anal., {\bf 69}(2008), 1889-1903.

\bibitem{FLRS}
{\sc  B. Fer\v{c}ec; V. Levandovskyy; V. G. Romanovski; D. S. Shafer}, {\it Bifurcation of critical periods of polynomial systems}, J. Differential Equations, {\bf 259(8)}(2015), 3825-3853.


\bibitem{GLY}
{\sc A. Gasull, C. Liu and J. Yang}, {\it On the number of critical periods for planar polynomial systems of arbitrary degree}, J. Differential Equations, {\bf 249(3)}(2010), 684-692.

\bibitem{GY}
{\sc A. Gasull and J. Yu}, {\it On the critical periods of perturbed isochronous centers}, J. Differential Equations, {\bf 244(3)}(2008), 696-715.

\bibitem{GZ}
{\sc A. Gasull and Y. Zhao}, {\it Bifurcation of critical periods from the rigid quadratic isochronous vector field}, Bull. Sci. Math., {\bf 132(4)}(2008), 292-312.

\bibitem{G}
{\sc L. Gavrilov}, {\it Remark on the number of critical points of the period}, J. Differential Equations, {\bf 101(1)}(1993), 58-65.

\bibitem{GGG}
{\sc L. Gavrilov, J. Gin\'{e} and M. Grau}, {\it On the cyclicity of weight-homogeneous centers}, J. Differential Equations, {\bf 246}(2009), 3126-3135.

\bibitem{HBHR}
{\sc W. Huang, V. Basov, M. Han and V. G. Romanovski}, {\it Bifurcation of critical periods of a quartic system}, Electron. J. Qual. Theory Differ. Equ., (2018), Paper No. 76, 18 pp.


\bibitem{I}
{\sc Yu. S. Il'yashenko}, {\it The appearance of limit cycles under perturbations of the equation $dw/dz=-Rz/Rw$
where $R(z,w)$ is a polynomial}, Math. Sb.(N.S.), {\bf 78(120)}(1969), 360-373.

\bibitem{LL}
{\sc C. Li and K. Lu}, {\it The period function of hyperelliptic Hamiltonians of degree 5 with real critical points},  Nonlinearity, {\bf 21}(2008), 465-483.

\bibitem{LCC}
{\sc J. Li, H.S.Y. Chan and K.W. Chung}, {\it Some lower bounds for $H(n)$ in Hilbert's 16th problem},  Qual. Theory
Dyn. Syst., {\bf 3}(2003), 345-360.

\bibitem{LLYZ}
{\sc W. Li, J. Llibre, J. Yang and Z. Zhang}, {\it Limit cycles bifurcating from the period annulus of quasi-homogeneous centers}, J. Dynam.
Differential Equations, {\bf 21}(2009), 133-152.

\bibitem{LZ1}
{\sc H. Liang and Y. Zhao}, {\it On the period function of reversible quadratic centers with their orbits inside quartics}, Nonlinear Anal., {\bf 71(11)}(2009), 5655-5671.

\bibitem{LZ2}
{\sc H. Liang and Y. Zhao}, {\it On the period function of a class of reversible quadratic centers}, Acta Math. Sin. (Engl. Ser.), {\bf 27(5)}(2011), 905-918.

\bibitem{MD}
{\sc P. De Maesschalck and F. Dumortier}, {\it The period function of classical Li¨¦nard equations},  J. Differential Equations, {\bf 233}(2007), 380-403.

\bibitem{MRV}
{\sc F. Ma\~{n}osas, D. Rojas and J. Villadelprat}, {\it Analytic tools to bound the criticality at the outer boundary of the period annulus},  J. Dynam. Differential Equations, {\bf 30}(2018), 883-909.

\bibitem{MV}
{\sc F. Ma\~{n}osas, and J. Villadelprat}, {\it  Criteria to bound the number of critical periods},  J. Differential Equations, {\bf 246(6)}(2009), 2415-2433.

\bibitem{RHH}
{\sc V. G. Romanovski, M. Han and W. Huang}, {\it  Bifurcation of critical periods of a quintic system},  Electron. J. Differential Equations, (2018), Paper No. 66, 11 pp.

\bibitem{YZ}
{\sc L. Yang and X. Zeng}, {\it The period function of potential systems of polynomials
with real zeros}, Bull. Sci. math., {\bf 133}(2009), 555-577.

\bibitem{Z}
{\sc Y. Zhao}, {\it The monotonicity of period function for codimension four quadratic system $Q_4$},  J. Differential Equations, {\bf 185(1)}(2002), 370-387.

\end{thebibliography}
\end{document}